\theoremstyle{definition}
\newtheorem{D}{Definition}[section]
\theoremstyle{plain}
\newtheorem*{theorem*}{Theorem}
\newtheorem{theorem}{Theorem}[section]
\newtheorem{lemma}[theorem]{Lemma}
\newtheorem{cor}[theorem]{Corollary}
\newtheorem{prop}[theorem]{Proposition}
\theoremstyle{definition}
\newtheorem{rem}[theorem]{Remark}
\newcommand{\R}{\ensuremath{\mathbb R}}
\newcommand{\s}{\ensuremath{\mathbb S}}
\newcommand{\eps}{\ensuremath{\varepsilon}}
\newcommand{\p}{\ensuremath{\frac{2n}{n-2}}}
\newcommand{\tub}[1]{\ensuremath{\Sigma^{#1}}}
\DeclareMathOperator{\vol}{Vol}
\title{Non-existence of Yamabe minimizers \\ on singular spheres}
\author{Kazuo Akutagawa${}^*$}
\address{Department of Mathematics, Chuo University, Tokyo 112-8551, Japan}
\email{akutagawa@math.chuo-u.ac.jp}
\author{Ilaria Mondello}
\address{Universit\'e de Paris Est Cr\'eteil, Laboratoire d'Analyse et math\'ematiques appliqu\'ees}
\email{ilaria.mondello@u-pec.fr}
\thanks{${}^*$\ 
supported in part by the Grants-in-Aid for Scientific Research (B), 
Japan Society for the Promotion of Science, No.~18HO1117.} 
\date{September, 2019.}
\begin{document}

\nocite{*}
\maketitle
\markboth{Non-existence of Yamabe minimizers on singular spheres} 
{Kazuo Akutagawa and Ilaria Mondello}

\begin{abstract}
We prove that a minimizer of the Yamabe functional does not exist for a sphere $\s^n$ of dimension $n\geq 3$, 
endowed with a standard edge-cone spherical metric of cone angle greater than or equal to $4\pi$, along a great circle of codimension two. 
When the cone angle along the singularity is smaller than $2\pi$, 
the corresponding metric is known to be a Yamabe metric, and we show that all Yamabe metrics in its conformal class are obtained from it by constant multiples and conformal diffeomorphisms preserving the singular set. 

\end{abstract}

\section*{Introduction}  

The Yamabe problem on a Riemannian smooth manifold $(M^n,g)$ with $n \geq 3$ consists in finding a metric with constant scalar curvature within the conformal class of a fixed metric. It is well known that one way of solving such problem is to minimize the Einstein-Hilbert functional over the conformal class of $g$, that is
$$ \mathcal{E}(\tilde{g})=\displaystyle \frac{\displaystyle \int_M R_{\tilde g}d\mu_{\tilde g}}{\mbox{Vol}_{\tilde g}(M)^{\frac{n-2}{n}}},  \quad \tilde{g}\in [g]= \{e^{2f}g, f \in C^{\infty}(M)\},$$
where $R_{\tilde{g}}, d\mu_{\tilde{g}}$ and ${\rm Vol}_{\tilde{g}}(M)$ denote respectively the scalar curvature, the volume form associated to $\tilde g$ and the volume of $(M, \tilde{g})$. The infimum of $\mathcal{E}$ over the conformal class $[g]$ is called the \emph{Yamabe constant} of $(M,g)$ and denoted by $Y(M,[g])$. We refer to a metric minimizing $\mathcal{E}$ as a \emph{Yamabe metric}. Thanks to the combined work of Yamabe, Trudinger, Aubin and Schoen (cf.\,\cite{LP87}, \cite{Au98}), we know that a Yamabe metric always exists on a compact smooth manifold without boundary. One of the key point in order to prove such a result is the \emph{Aubin's inequality}: for any compact smooth manifold $(M^n,g)$ we have
$$Y(M,[g]) \leq \bf Y_n$$
where $\bf Y_n$ denotes the Yamabe constant of the unit sphere $\s^n$ endowed with the round metric. Whenever the inequality is strict, there exists a Yamabe metric; in the case of equality, the manifold is conformally equivalent to the round sphere, and thus a Yamabe metric exists as well.  

It is natural to ask whether it possible to solve the Yamabe problem in other settings. For example, one can consider manifolds with conical or edge-cone singularities. For these latter, the local geometry at codimension two singularities is modeled on the product $\R^{n-2}\times C(\s^1_a)$, where $\s^1_a$ is the circle of length $\alpha=2\pi a$, $a>0$. 
We refer to $\alpha$ as the cone angle at the singularity. Edge-cone singularities naturally appear in the study of converging sequences of smooth manifolds: for instance, a recent result of  \cite{CJN} showed that when considering the pointed Gromov-Hausdorff limit $(X,d,p)$ of a non-collapsing sequence of smooth manifolds with a lower Ricci bound, the tangent cone at codimension two singularities is almost everywhere isometric to $\R^{n-2}\times C(\s^1_a)$, with $a \in (0,1]$. Metrics with edge-cone singularities have also been studied in various different contexts: for example, \cite{AL13} is concerned with obstructions to the existence of \emph{Einstein} edge-cone metrics on manifolds of dimension $4$; \cite{BV} is devoted to the study of the Yamabe flow on manifolds with edge-cone singularities. 
Furthermore, metrics with edge-cone singularities along a smooth divisor of real dimension two also played an important role 
in proving the existence of K\"ahler-Einstein metrics on Fano manifolds (see \cite{CDS15}, \cite{JMR}). 
It is then interesting to investigate to which extent the Yamabe problem on manifolds with edge-cone singularities corresponds to its smooth counterpart.

Thanks to \cite{ACM14}, an existence result for the Yamabe problem in the singular setting is available in the more general context of \emph{pseudomanifolds} and \emph{compact stratified spaces} carrying an \emph{iterated edge metric} (see \cite{ACM18} for more general settings), 
which include manifolds with conical singularities \cite{AB03}, orbifolds \cite{Ak12} and manifolds with edge-cone singularities \cite{M15}. 
The proof of such result relies on a generalized Aubin's inequality introducing a new conformal invariant, the \emph{local Yamabe constant}, which takes in account the local geometry at singular points. If the Yamabe constant of the whole space
is strictly smaller than the local Yamabe constant, then a Yamabe metric does exists (see the next section for definitions). Nevertheless, the situation is quite different from the smooth setting. First of all, the local Yamabe constant may be strictly smaller than $\bf Y_n$ and its value is in general unknown. A first result in \cite{Ak12} gave the local Yamabe constant in the case of orbifolds with isolated singularities; \cite{M17} provides an expression for the local Yamabe constant for a compact stratified space whose singularities have cross sections carrying an Einstein iterated edge metric. This applies in particular to general orbifolds and to the case of codimension two edge-cone singularities: 
if the cone angle $\alpha = 2\pi a$ along the singularity is smaller than $2\pi$, the local Yamabe constant is equal to $a^{\frac 2n}\bf Y_n$; it coincides with $\bf Y_n$ otherwise. 

Another remarkable difference with respect to the smooth setting, is that in in the case of equality between the local and the global Yamabe constants, examples of non-existence for the Yamabe minimizers do occur. Indeed, J.~Viaclovsky \cite{V10} gave a family of examples, each with one isolated orbifold singularity, for which the local and global Yamabe constants coincide and moreover a Yamabe minimizer does not exist. Such singular manifolds are constructed as the conformal orbifold compactification of any hyperk\"ahler ALE $4$-manifold with group of order $n >1$. 

We observe that in Viaclovsky's examples, the conical singularity has the maximal codimension in the ambient manifold. In this work, we are concerned in getting a better insight of the Yamabe problem for edge-cone singularities of the minimal codimension one can allow without creating a boundary, that is, codimension two.  As we stated above, in this case the value of the local Yamabe constant is known and it only depends on the dimension and on the cone angle at the singularities. We then consider a class of examples whose Yamabe constant coincide with that value. Those are given by spheres $\s^n$ endowed with metrics $h_a$ carrying an edge-cone singularity of angle $2\pi a$ along a great circle of codimension 2. Such metrics are in addition Einstein away from the singularity. 

When the cone angle is smaller than $2\pi$, we know that $h_a$ is a Yamabe metric (see  Corollary 3.2 in \cite{M17}). We additionally show that the conformal class of $h_a$ behaves precisely as the one of the smooth round metric: any other Yamabe metric in the conformal class is obtained from $h_a$ by a conformal diffeomorphism preserving the singular set, up to constant multiples.
 
The case of angle greater than $2\pi$ is substantially different. Indeed, $h_a$ is no longer a Yamabe metric. Moreover, our main result states that, when the cone angle is greater than or equal to $4\pi$, the conformal class of the singular metric does not admit any Yamabe metric. More precisely:

\quad \\ 
{\bf Main Theorem.}\ \ 
{\it Let $\s^n_a = (\mathbb{S}^n, h_a)$ be the $n$-dimensional sphere endowed with a standard edge-cone spherical metric of cone angle $\alpha = 2\pi a$ 
along a great circle $\mathbb{S}^{n-2}$ for some $a \geq 2$. 
Then, there is no solution to the Yamabe problem on $(\mathbb{S}^n, [h_a])$.
}\\ 

Our proof by contradiction depends on a combination of a branched covering version of Aubin's lemma for finite coverings \cite{Au76}, \cite{AN07}, 
the computation of the Yamabe constant of  $(\mathbb{S}^n, [h_a])$ \cite{M15}, \cite{M17}, and regularity results for Yamabe minimizers \cite{ACM15}, \cite{M15}, \cite{M18}. We conjecture that an analog result should hold true for $a \in (1, 2)$ as well. 

We point out that one of the reasons for investigating the Yamabe problem in a singular setting is an application 
to the study of the \emph{Yamabe invariant} \cite{Ko87} (or $\sigma$-invariant \cite{S89}) of a smooth compact manifold $M$.
This latter is a differential-topological invariant defined as the supremum of Yamabe constants over all conformal classes 
$$Y(M)=\sup_{g\in \mathcal{M}(M)} Y(M,[g])=\sup_{ g\in \mathcal{M}(M)}\inf_{\tilde g \in [g]}\mathcal{E}(\tilde g),$$
where $\mathcal{M}(M)$ denotes the set of smooth Riemannian metrics on $M$. It is a very difficult problem to explicitly compute, or even give estimates for the Yamabe invariant, especially when it is positive (see for example Section 1.2 in \cite{ADH}, or \cite{Mac}, for reviews of the known results about the Yamabe invariant). In an upcoming work \cite{AM}, we address to the question of obtaining a positive lower bound for the Yamabe invariant, given by the Yamabe constant of a singular Einstein metric with a edge-cone singularity of codimension two and cone angle smaller than $2\pi$. This can be done because, thanks to \cite{M17}, such metrics are known to be Yamabe metrics for angle smaller than $2\pi$. When the angle is greater than $2\pi$, our main theorem shows that the situation is more involved, because not only there exist singular Einstein metrics on the sphere that are not Yamabe metrics, but also their conformal class does not contain \emph{any} Yamabe metric. 

This paper is organized as follows. 
In the first section, we give some basic definitions and preliminary results about the singular spaces that we deal with, the metrics we consider and their local and global Yamabe constants. We define in particular singular spheres with edge-cone singularities and show that their local and global Yamabe constants coincide. 
Section 2 is devoted to prove regularity results about Schr\"odinger equations on stratified spaces endowed with an Einstein metric. Even if we use such results only in the case of singular spheres, we state them in their full generality, as they can be of interest in the study of Einstein edge-cone metrics in a more general setting.
In the two following sections, we focus on the edge-cone spheres $\s^n_a = (\s^n, h_a)$ with a cone angle $\alpha=2\pi a$ along a great circle of codimension two. 
In section 3, we consider the case of angle smaller than $2\pi$. Section 4 contains the proof of the main result of this paper.
The last section rephrases the previous results in terms of a Sobolev inequality in $\R^n \times C(\s^1)$. 
\quad \\ 
\quad \\ 
\noindent
{\bf Acknowledgements.} 
The first author would like to thank Jeff Viaclovsky for valuable discussions on the article Atiyah-LeBrun \cite{AL13}. \\

\section{Preliminaries}

We briefly recall some notions about smoothly stratified spaces, and refer to \cite{ACM14} and \cite{M17} for the precise definitions. 

A compact {\it stratified space} is a compact topological space $X$ that can be decomposed into two disjoint subsets, 
the {\it regular set} $\Omega$, which is a smooth open manifold of dimension $n$, dense in the whole space, 
and the {\it singular set} $\Sigma$. 
This latter has different components $\Sigma^j$, the {\it singular strata}, 
that are smooth manifolds of dimension $j$, with $0\leq j \leq (n-2)$. 
Assume that each singular stratum $\Sigma^j$ has only finitely many connected components. 
Each point $x$ of a singular stratum $\Sigma^j$ has a neighbourhood which is homeomorphic to the product $\mathbb{B}^j(\eps) \times C_{[0,\eps)}(Z_x)$, where $\mathbb{B}^j(\eps)$ is an Euclidean open ball in $\R^j$ and $ C_{[0,\eps)}(Z_x)$ is the truncated cone over $Z_x$, a compact stratified space which is called the {\it link} of the stratum $\Sigma^j$. 

An {\it iterated edge metric} on a stratified space can be defined by iteration on the dimension; 
it is a Riemannian metric on the regular set with precise asymptotics close to the singular stratum 
(see for example Section 3 in \cite{ALMP}). 
Here, we are mostly interested in the case of one singular stratum of minimum codimension two. 
In this case, the link at any point is a circle and we can define a model metric on $\R^{n-2}\times C(\s^1)$ 
$$h + dr^2 + a^2r^2d\theta^2,$$
where $h$ is a Riemannian metric on $\R^{n-2}$ and the parameter $a > 0$ determines the angle $\alpha = 2\pi a$ of the two-dimensional cone $C(\s^1)$. 
For a stratified space with only one stratum of codimension two, an iterated edge metric, also called an edge-cone metric (cf.\,\cite{AL13}), is defined as follows: 

\begin{D}
Let $X$ be a compact stratified space of dimension $n$ with one singular stratum of codimension 2, denoted $\Sigma^{n-2}$. 
An {\it edge-cone metric} $g$ on $X$ is a Riemannian metric on the regular set $\Omega = X - \Sigma^{n-2}$ 
such that there exist positive constants $\gamma$, $\Lambda$ and for any $x \in \Sigma^{n-2}$ there exists $a_x > 0$ such that 
$$
|g-(h+dr^2+a_x^2r^2d\theta^2)| \leq \Lambda r^{\gamma}.
$$
Here, $r = r(\cdot) = d(\cdot\,, \Sigma^{n-2})$. 
We refer to $\alpha_x = 2\pi a_x$ as the {\it cone angle} of $\Sigma^{n-2}$ at the point $x$.
\end{D}

All the curvatures that are associated with a Riemannian metric $g$, scalar curvature $R_g$ and Ricci curvature ${\rm Ric}_g$, are well-defined on the regular set $\Omega$. 
Therefore, an Einstein metric on a compact stratified space is defined by an iterated edge metric $g$ 
such that there exists $\lambda \in \R$ for which ${\rm Ric}_g = \lambda g$ on $\Omega$. 

\subsection*{The Yamabe Problem on stratified spaces} We follow here the lines of \cite{ACM14}. 
Let $X$ be a compact stratified space of dimension $n\geq 3$. 
We can define the Sobolev space $W^{1,2}(X)$ as the $W^{1,2}$-completion of Lipschitz functions on X with the usual Sobolev norm, 
and the Laplacian as the Friedrichs operator associated to the Dirichlet energy. 
Given an iterated edge metric $g$ such that the scalar curvature $R_g$ belongs to $L^q(X)$ for $q >\frac n2$, 
the Yamabe functional is defined as follows: for a function $u$ in $W^{1,2}(X)$ 
\begin{equation*}
Q_g(u) := \frac{E_g(u)}{||u||_{\p}^2}=\frac{\displaystyle \int_X (a_n|du|^2 + R_gu^2)d\mu_g}{||u||_{\p}^2}. 
\end{equation*}
Similar to the smooth case, the Yamabe constant $Y(X, d)$ is defined as the infimum of $Q_g$: 
\begin{equation*}
Y(X, d) := Y(\Omega, [g]) = \inf \{ Q_g(u) |\ u \in C^1_0(\Omega),\ u \not\equiv 0 \} 
\end{equation*} 
\begin{equation*}
\qquad \qquad \qquad \qquad \qquad \ \ = \inf \{ Q_g(u) |\ u \in W^{1,2}(X),\ u \not\equiv 0 \}. 
\end{equation*}
As in the smooth setting, \emph{Aubin's inequality} holds, that is, the Yamabe constant $Y(X, [g])$ is always smaller than or equal to ${\bf Y}_n$. 

A {\it Yamabe minimizer} is a positive function $u$ such that $Q_g(u)$ equals the Yamabe constant.
If such $u$ exists, the metric $\widetilde{g}=u^{\frac{4}{n-2}}g$ is called a {\it Yamabe metric} and it has constant scalar curvature on the regular set. 
As shown by Theorem 1.12 in \cite{ACM14}, if a Yamabe minimizer does exist, it belongs to $W^{1,2}(X)\cap L^{\infty}(X)$ and it satisfies the Yamabe equation on the regular set $\Omega$: 
\begin{equation}
\label{Yeq}
- a_n \Delta_g u + R_g u = Y(M,[g])u^{\frac{n+2}{n-2}}, 
\end{equation}
where $- \Delta_g$ denotes the non-negative Laplacian of $g$.

For an open ball $B_r(p)$ (of radius $r$ centered at $p$ ), 
we define the Yamabe constant of the ball similarly, 
by taking the infimum of $Q_g$ over $W^{1,2}_0(B_r(p)\cap \Omega)$: 
\begin{equation*}
Y(B_r(p)) =\inf \{ Q_g(u)\ |\ u \in W^{1,2}_0(B_r(p)\cap \Omega),\ u \not\equiv 0 \}, 
\end{equation*}
Here, $W^{1,2}_0(B_r(p)\cap \Omega)$ denotes the $W^{1,2}$-completion of $C^1_0(B_r(p)\cap \Omega)$. 
This allows us to introduce the {\it local Yamabe constant} 
\begin{equation*}
Y_{\ell}(X) =\inf_{p \in X}\lim_{r \rightarrow 0}Y(B_r(p)). 
\end{equation*}
As in the smooth case, a generalized Aubin's inequality holds 
$$ 
Y(X, d) \leq Y_{\ell}(X). 
$$
The value of the local Yamabe constant is known in the case of orbifolds \cite{Ak12} or more generally when the links of the singular strata carry Einstein metrics \cite{M17}. This latter result holds in particular when there is only one stratum of codimension two. As stated in the introduction, in this case it is indeed possible to give an expression of the local Yamabe constant that only depends on the dimension and on the angle along the singular stratum. More precisely: 

\begin{theorem}{\emph{(\cite{M17})}}
\label{localYamabe}
Let $(X^n,g)$ be a compact stratified space with singular set $\Sigma^{n-2}$ of codimension two, 
endowed with an edge-cone metric. 
One of the two possibilities holds:
\begin{itemize}
\item[(i)] If there exists $x \in \Sigma^{n-2}$ such that $\alpha_x \leq 2\pi$, then
$$
Y_{\ell}(X) = \Big{(} \frac{\alpha}{2\pi}\Big{)}^{\frac{2}{n}} {\bf Y}_n,\quad \alpha := \inf_{x \in \Sigma^{n-2}}\alpha_x. 
$$  
\item[(ii)] If for all $x \in \Sigma^{n-2}$, we have $\alpha_x > 2\pi$, then $Y_{\ell}(X) = {\bf Y}_n$. 
\end{itemize}
\end{theorem}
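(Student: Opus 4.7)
The plan is to compute $\lim_{r\to 0} Y(B_r(p))$ pointwise at each $p \in X$ and then take the infimum over $p$, separating the analysis into regular points $p\in\Omega$ and singular points $p\in\Sigma^{n-2}$. At a regular point, $(B_r(p), r^{-2}g)$ converges smoothly to a Euclidean unit ball, whose Yamabe constant (with Dirichlet data) is $\mathbf{Y}_n$; a perturbation argument then yields $\lim_{r\to 0} Y(B_r(p)) = \mathbf{Y}_n$. At a singular $x$ with cone angle $2\pi a_x$, the edge-cone estimate $|g - (h+dr^2+a_x^2r^2d\theta^2)| \leq \Lambda r^\gamma$ forces $(B_r(x), r^{-2}g)$ to converge to a unit ball in the model cone $\mathcal{C}_{a_x} := \R^{n-2}\times C(\s^1_{a_x})$; approximation of test functions plus scale invariance of the cone give $\lim_{r\to 0}Y(B_r(x)) = Y(\mathcal{C}_{a_x})$.

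The crux is to compute $Y(\mathcal{C}_a)$. Inverse stereographic projection from a point on the singular stratum realizes $\mathcal{C}_a$ as conformally equivalent to $(\s^n, h_a)\setminus\{\mathrm{pt}\}$, so by conformal invariance $Y(\mathcal{C}_a) = Y(\s^n, [h_a])$. For $a\leq 1$: since $h_a$ is Einstein with $R_{h_a}=n(n-1)$ and $\vol(h_a) = a\cdot\vol(\s^n_{\mathrm{round}})$, direct computation gives $\mathcal{E}(h_a) = a^{2/n}\mathbf{Y}_n$, providing the upper bound; the matching lower bound comes from a branched-covering argument (the $k$-fold branched cover $\R^n\to\mathcal{C}_{1/k}$ transfers the sharp Euclidean Sobolev inequality when $a=1/k$), extended to general $a\in (0,1]$ by a density/symmetrization argument exploiting that $\mathcal{C}_a$ has a synthetic non-negative Ricci bound in this regime. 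For $a>1$: Aubin's generalized inequality gives $Y(\s^n,[h_a])\leq \mathbf{Y}_n$, and the reverse inequality requires a concentration analysis showing that any minimizing sequence must either concentrate at a smooth point (yielding limit $\mathbf{Y}_n$) or at the singular stratum (where rotational symmetrization produces a quotient bounded below by $a^{2/n}\mathbf{Y}_n > \mathbf{Y}_n$).

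Finally, taking the infimum over $p$: regular points and singular points with $a_x>1$ each contribute $\mathbf{Y}_n$, while singular points with $a_x\leq 1$ contribute $a_x^{2/n}\mathbf{Y}_n$. In case (i), the infimum is realized at the singular point of smallest angle, giving $Y_\ell(X)=(\alpha/2\pi)^{2/n}\mathbf{Y}_n$; in case (ii), every contribution equals $\mathbf{Y}_n$. The main obstacle I expect is the sharp computation of $Y(\mathcal{C}_a)$: for $a\leq 1$ the lower bound requires a rearrangement argument compatible with the singular geometry (handling non-rotationally-invariant test functions), and for $a>1$ the lower bound requires carefully excluding test functions that exploit the excess angular room to concentrate with a quotient strictly below $\mathbf{Y}_n$.
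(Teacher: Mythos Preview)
This theorem is not proved in the paper; it is quoted from \cite{M17}. Your overall architecture --- reduce $\lim_{r\to 0}Y(B_r(p))$ to the Yamabe constant of the tangent cone, identify $Y(\R^{n-2}\times C(\s^1_a))=Y(\s^n,[h_a])$ via stereographic projection, and then compute the latter --- matches that of \cite{M17}, and the upper bound $Q_a(h_a)=a^{2/n}\mathbf{Y}_n$ is correct (Lemma~4.3 of \cite{M15}).

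The genuine gaps are the two lower bounds you yourself flag, and your proposed resolutions do not work. For $a\in(0,1]$: the branched-cover trick only yields the sharp bound at $a=1/k$, and there is no continuity-in-$a$ or density argument available a priori to interpolate. The route taken in \cite{M17} is entirely different: a singular Lichnerowicz theorem (Theorem~2.1 there) shows that when $\mathrm{Ric}_g\ge(n-1)g$ on the regular set and all cone angles are $\le 2\pi$, one has $\lambda_1(-\Delta_g)\ge n$; combined with the Einstein condition this forces $h_a$ itself to be a Yamabe minimizer for every $a\le 1$ (cf.\ Corollary~3.2 in \cite{M17}, Theorem~4.1 in \cite{M18}), giving the lower bound uniformly in $a$ without any covering.

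For $a>1$: your dichotomy is circular and the symmetrization step does not do what you claim. If a minimizing sequence concentrates at a point of the edge, the blow-up is again $\R^{n-2}\times C(\s^1_a)$, so nothing is gained. As for ``rotational symmetrization'': averaging in $\theta$ decreases both the numerator and the denominator of $Q$, so no inequality on the quotient follows; symmetric-decreasing rearrangement in $\theta$ preserves $L^p$ norms and decreases the Dirichlet energy, hence decreases $Q$, but the result is not $\theta$-independent and you still cannot read off the bound $a^{2/n}\mathbf{Y}_n$. The argument in \cite{M17} (Proposition~4.7) establishing $Y(\R^n,[g_a])=\mathbf{Y}_n$ for $a\ge 1$ proceeds by a different mechanism; this step genuinely requires an analytic inequality, not a soft concentration dichotomy.
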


Thanks to Theorem 4.1 in \cite{M18}, we also know that if $\alpha_x \leq 2\pi$ for all $x \in \Sigma^{n-2}$, then an Einstein metric is also a Yamabe metric. 
This is not necessarily true when the cone angles along the singular set are greater than $2\pi$ (see Remark \ref{EinsteinNotYamabe} for a simple example). 

\subsection*{Standard edge-cone spheres} 
We are now going to define an edge-cone metric on the unit sphere $\s^n$ in such a way that it has a cone angle along a great circle of codimension 2. For $a\in \R_{> 0}$, we consider the following double warped product metric: 
$$ h_a =d\rho^2+\sin^2\rho\,g_{\s^{n-2}}(x) +a^2\cos^2\rho\,d\theta^2\ \mbox{ on } (\rho, x, \theta) \in X=\left(0, \frac{\pi}{2}\right)\times \s^{n-2}\times \s^1,$$ 
where $g_{\s^{n-2}}$ denotes the standard round metric of constant curvature $1$ on $\s^{n-2}$. The {\it edge-cone sphere} $\s^n_a$ is defined to be the 
metric completion of $X$ with respect to the metric defined by $h_a$. We refer to $h_a$ as the standard {\it edge-cone spherical metric} (cf.\,\cite{AL13}). 
When $a$ equals 1, this is simply the round $n$-sphere $\s^n$. 
Otherwise, $\s^n_a = (\s^n, h_a)$ is a compact stratified space, homeomorphic to the sphere, with one singular stratum of codimension two given by: 
$$\Sigma=\Big{(} \left\{\frac{\pi}{2}\right\} \times \s^{n-2}\times \s^1 \Big{)}\Big{/}\sim\ = \s^{n-2} \times \{\ast\}.$$ 
Here, $\ast$ denotes the conical singular point in the spherical cone $( \left(0, \frac{\pi}{2}\right)\times \s^1, d\rho^2 + a^2\cos^2\rho\,d\theta^2 )$. 
Observe that the metric $h_a$ on the regular set is of constant curvature $1$, and hence it is an Einstein metric with ${\rm Ric}_{h_a} = (n-1)\,h_a$. 

Now consider $\R^n$ seen as the product $\R^{n-2}\times \R^2 (\ni (x, r))$ and define the metric:
\begin{equation*}
g_{a}= g_{\mathbb{E}^{n-2}}(x) + dr^2 + a^2r^2d\theta^2,
\end{equation*}
where $g_{\mathbb{E}^{n-2}}$ denotes the Euclidean metric on $\R^{n-2}$. 
As above, when $a$ equals $1$, the previous metric is the Euclidean metric; 
otherwise we have a metric with an edge-cone singularity of cone angle $\alpha =2\pi a$ along $\R^{n-2}$.

The metrics $h_a$ on $\s^n$ and $g_a$ on $\R^n$ are conformally equivalent via the stereographic projection, as in the smooth case. 
Indeed, let $N$ be a point in the singular set $\Sigma$ of $\s^n_a$ 
and denote by $\pi$ be the stereographic projection from $\s^n - \{N\}$ onto $\R^n$. 
If $\Phi$ is its inverse, it is easy to check that the pull-back of $h_a$ is conformal to $g_a$ and it is equal to: 
\begin{equation*}
\Phi^*h_a = \left(\frac{2}{1+|x|^2+r^2}\right)^2g_a,
\end{equation*}
where $|\cdot|$ denotes the Euclidean norm in $\R^{n-2}$. 
We also observe that $(\s^n, h_a)$ can be seen as the spherical suspension of $(\s^{n-1}, h_a^{n-1})$, meaning that it is isometric to:
$$([0,\pi]\times \s^{n-1}, dt^2+\sin^2t\,h_a^{n-1}).$$

For the sake of simplicity, from now on we denote the local Yamabe constant of $(\s^n, [h_a])$ by $Y_a$, 
its Yamabe functional $Q_{h_a}$ by $Q_a$, the energy $E_{h_a}$ by $E_a$, the Laplacian and volume form associated to $h_a$ by $\Delta_a$ and $d\mu_a$ respectively. 

Since $h_a$ is an Einstein metric on $\Omega$, we can apply Theorem \ref{localYamabe} and get the value of the local Yamabe constant of $(\s^n, [h_a])$:
\begin{cor}
Let $a \in \R_{>0}$ and consider $\s^n_a = (\s^n, h_a)$ defined as above. Then its local Yamabe constant $Y_a$ satisfies:
\begin{itemize}
\item[(a)] if $a \in (0,1)$, $Y_a=a^{\frac{2}{n}}{\bf Y}_n$, 
\item[(b)] if $a \geq 1$, $Y_a={\bf Y}_n$. 
\end{itemize}
\end{cor}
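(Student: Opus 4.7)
The plan is to derive this as an immediate application of Theorem \ref{localYamabe}. What needs to be checked is that $\s^n_a$ satisfies the hypotheses of that theorem, namely that it is a compact stratified space of dimension $n$ with a single codimension-two stratum carrying an edge-cone metric, and that the cone angle is uniform along the singular set. The first point has been essentially established in the construction of $\s^n_a$ given above: the regular set is $\Omega = \s^n - \Sigma$, and the singular stratum is $\Sigma \cong \s^{n-2}$.

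To read off the cone angle at a point $x \in \Sigma$, I would introduce the local coordinate $r = \frac{\pi}{2} - \rho$ near $\rho = \pi/2$. A short Taylor expansion using $\cos\rho = \sin r \sim r$ and $\sin\rho = \cos r \sim 1$ as $r \to 0$ shows that
\[
h_a = dr^2 + (1 + O(r^2))\, g_{\s^{n-2}} + a^2 r^2 (1 + O(r^2))\, d\theta^2,
\]
which fits the edge-cone model $h + dr^2 + a_x^2 r^2 d\theta^2$ of Definition~1.1 with $a_x = a$ and remainder controlled by a power of $r$. In particular, the cone angle $\alpha_x = 2\pi a$ is the same at every singular point.

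With these two facts in hand, splitting into cases finishes the proof. If $a \in (0,1)$, then every cone angle equals $2\pi a < 2\pi$, so case (i) of Theorem \ref{localYamabe} yields
\[
Y_a = \left(\frac{2\pi a}{2\pi}\right)^{2/n}{\bf Y}_n = a^{2/n}{\bf Y}_n.
\]
If $a > 1$, case (ii) of the same theorem gives $Y_a = {\bf Y}_n$. Finally, $a = 1$ corresponds to the smooth round sphere, where $Y_1 = {\bf Y}_n$ is classical. The only technical step here is the identification of the cone angle from the warped-product form of $h_a$, and this is a routine local calculation; no genuine obstacle arises.
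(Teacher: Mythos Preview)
Your proposal is correct and follows the same route as the paper: the corollary is stated as an immediate application of Theorem~\ref{localYamabe}, and the paper's only justification is the single sentence ``Since $h_a$ is an Einstein metric on $\Omega$, we can apply Theorem~\ref{localYamabe}.'' You simply supply the details of verifying the edge-cone hypothesis and reading off the uniform cone angle $2\pi a$ from the warped-product form of $h_a$, which the paper leaves implicit.
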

Moreover we have the following result:
\begin{lemma}
For any $a\in \R_{> 0}$, the local Yamabe constant and the Yamabe constant of $(\s^n, [h_a])$ coincide: 
$$Y(\s^n, [h_a])=Y_a.$$
\end{lemma}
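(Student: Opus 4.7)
\emph{Proof plan.} The generalized Aubin inequality recalled above gives $Y(\mathbb{S}^n,[h_a])\le Y_a$; the task is the reverse inequality. The plan is to pass from $(\mathbb{S}^n,h_a)$ to the scale-invariant cone model $(\mathbb{R}^n,g_a)$ via the stereographic projection $\Phi$ from a point $N\in\Sigma$. Because $\Phi^*h_a=\omega^2 g_a$ with $\omega=2/(1+|x|^2+r^2)$ and $R_{g_a}\equiv 0$ on the regular set, the Yamabe quotient is conformally invariant: for $u$ on $\mathbb{S}^n$ and $v=\omega^{(n-2)/2}(u\circ\Phi)$ on $\mathbb{R}^n$ one has $Q_a(u)=Q_{g_a}(v)$.

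First I would show that the singular point $N$ has vanishing $W^{1,2}$-capacity in $(\mathbb{S}^n,h_a)$: the edge-cone volume bound $\mathrm{Vol}_{h_a}(B_r(N))\lesssim r^n$ (valid since $n\ge 3$) allows one to build a family of logarithmic radial cut-offs converging to $1$ in $W^{1,2}$. Consequently the infimum defining $Y(\mathbb{S}^n,[h_a])$ may be restricted to test functions supported away from $N$, and $\Phi$ puts these in bijection with compactly supported test functions on $(\mathbb{R}^n,g_a)$; together with conformal invariance this yields
$$Y(\mathbb{S}^n,[h_a]) = Y(\mathbb{R}^n,[g_a]).$$

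Next I would invoke the scale invariance of $g_a$: the dilation $T_\lambda:(x,r,\theta)\mapsto(\lambda x,\lambda r,\theta)$ satisfies $T_\lambda^*g_a=\lambda^2 g_a$, so the Sobolev quotient $Q_{g_a}$ is invariant under rescaling. Hence $Y(B_r(0))$ is constant in $r>0$ in $(\mathbb{R}^n,g_a)$ and
$$Y(\mathbb{R}^n,[g_a])=\lim_{r\to 0^+}Y(B_r(0)).$$
Since $\omega$ is smooth and positive near $0$, this limit equals the local Yamabe constant at $N$ in $(\mathbb{S}^n,h_a)$. On the other hand, the local Yamabe constant at any regular point equals $\mathbf{Y}_n$, so the definition of $Y_a$ as an infimum over $p\in\mathbb{S}^n$ gives $Y_a=\min(\mathbf{Y}_n, Y(\mathbb{R}^n,[g_a]))$. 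Combining this with the classical Aubin bound $Y(\mathbb{R}^n,[g_a])\le\mathbf{Y}_n$ forces $Y_a=Y(\mathbb{R}^n,[g_a])=Y(\mathbb{S}^n,[h_a])$.

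The hard part will be the first identification $Y(\mathbb{S}^n,[h_a])=Y(\mathbb{R}^n,[g_a])$, i.e.\ the capacity estimate at the singular point $N$: one has to check that the standard cut-off construction — designed for smooth points — still works at an edge-cone singularity. This should follow from the volume estimate $\mathrm{Vol}_{h_a}(B_r(N))\lesssim r^n$ and $n\ge 3$, but some care is needed because $N$ sits on the singular stratum rather than in the regular set.
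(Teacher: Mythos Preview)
Your argument is correct and uniform in $a$, but it is not the route the paper takes. The paper's proof is essentially a citation: it splits into $a<1$ and $a\geq 1$, invokes the direct computation $Q_a(h_a)=a^{2/n}\mathbf{Y}_n$ (Lemma~4.3 in \cite{M15}), uses that $h_a$ is already known to be a Yamabe metric for $a<1$ (Corollary~3.2 in \cite{M17}) to get $Y(\mathbb{S}^n,[h_a])=a^{2/n}\mathbf{Y}_n=Y_a$, and for $a\geq 1$ quotes Proposition~4.7 of \cite{M17} giving $Y(\mathbb{R}^n,[g_a])=\mathbf{Y}_n$. Your approach instead rederives the key identification $Y(\mathbb{S}^n,[h_a])=Y(\mathbb{R}^n,[g_a])=Y_a$ from first principles (capacity of a point plus scale invariance of the cone), without ever computing $Q_a(h_a)$ or needing to know that $h_a$ is a minimizer for small angles. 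This buys you a self-contained argument that does not distinguish $a<1$ from $a\geq 1$; the paper's route is shorter on the page but leans on the machinery of \cite{M17}. Two small clean-ups: the point $\Phi(0)$ is the antipode $S$ of $N$, not $N$ itself, so what you actually compute is the local Yamabe constant at $S$ (which equals that at $N$ by homogeneity along $\Sigma$); and the ``hard part'' you flag is in fact routine, since the volume bound $\mathrm{Vol}_{h_a}(B_r(N))\lesssim r^n$ holds at edge-cone points just as at smooth ones, and a single point in dimension $n\geq 3$ then has zero $W^{1,2}$-capacity by the usual linear cut-off.
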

\begin{proof}
A simple computation shows that for any $a$:
$$Q_a(h_a)=a^{\frac{2}{n}}{\bf Y}_n$$
See for instance Lemma 4.3 in \cite{M15}. If $a < 1$, $h_a$ is a Yamabe metric and thus the Yamabe constant $Y(\s^n, [h_a])$ coincides with $Q_a(h_a)$. Together with statement (a) in the previous Corollary, this directly shows that $Y(\s^n, [h_a])=Y_a$.
 
If $a \geq 1$, then $(\s^n, [h_a])$ is conformally equivalent to $(\R^n, [g_a])$, 
and thanks to Proposition 4.7 in \cite{M17} we know that 
$$Y(\s^n, [h_a]) = Y(\R^n,[g_a])={\bf Y}_n = Y_a.$$
\end{proof}
\begin{rem}
\label{EinsteinNotYamabe}
The fact that $Q_a(h_a)$ equals $a^{\frac{2}{n}}{\bf Y}_n$ does not depend on $a$ being smaller than $1$. 
In particular, the previous lemma shows that the Einstein metric $h_a$ is \emph{not} a Yamabe metric when $a > 1$. 
\end{rem}
%

\section{Regularity results}
Our proofs rely on having precise information about the regularity of a Yamabe minimizer and its gradient. Such regularity is not specific to the particular case of edge-cone spheres, nor to the only presence of codimension two singularities, but it actually holds on a general compact stratified space, with possibly higher codimension strata whose links are not necessarily smooth. The results of this section are therefore stated in their full generality.


We are going to prove the following:

\begin{prop}
\label{reg}
Let $(X^n,g)$ be a compact stratified space with singular set $\Sigma$, endowed with an Einstein edge-cone metric. 
Assume that there exists a Yamabe minimizer $u \in W^{1,2}(X)\cap L^{\infty}(X)$. Then the following hold: 
\begin{itemize}
\item[(i)] If for all $x \in \Sigma^{n-2}$ the cone angle at $x$ satisfies $\alpha_x\leq 2\pi$, then $u$ belongs to $W^{2,2}(X)$ and its gradient is bounded. In particular, $u$ is a Lipschitz function.
\item[(ii)] If there exists $x\in \Sigma^{n-2}$ such that the cone angle at $x$ is $\alpha_x>2\pi$, then $u$ belongs to $C^{0,\nu}(X)$, with $\nu= \nu(X) \in (0,1)$. 
Moreover for any $\eps >0$ we have 
$$||du||_{L^{\infty}(X \setminus \tub{\eps})}\leq C \eps^{\nu -1}.$$ 
\end{itemize}
In both cases, the integration by parts formula holds true for $u$  
\begin{equation}
\label{IPP}
- \int_X u\Delta_g ud\mu_g = \int_X |du|^2d\mu_g.
\end{equation}
\end{prop}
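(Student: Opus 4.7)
The plan is to treat \eqref{Yeq} as a linear Schr\"odinger-type equation with bounded right-hand side and then invoke the regularity theory on stratified spaces developed in \cite{ACM15}, \cite{M15}, \cite{M18}. Since $g$ is Einstein on $\Omega$, the scalar curvature $R_g$ is constant; as $u \in L^\infty(X)$, rewriting \eqref{Yeq} as
$$-a_n \Delta_g u = Y(X,[g])\, u^{\frac{n+2}{n-2}} - R_g u =: F,$$
we obtain $F \in L^\infty(X)$, so $u$ is a bounded weak solution of a linear elliptic equation with bounded source and constant potential, which is the exact setting of the cited regularity theorems.

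For statement (i), the assumption $\alpha_x \leq 2\pi$ at every point of $\Sigma^{n-2}$ combined with the Einstein condition yields a weak Bochner-type inequality on $(X,g)$. This is the analytic input used in \cite{M18} to establish $W^{2,2}$-regularity and a global $L^\infty$-gradient bound for bounded solutions of such equations; applying these results to $u$ will give $u \in W^{2,2}(X)$ with $|du| \in L^\infty(X)$, hence $u$ Lipschitz.

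For statement (ii) the Bochner improvement is no longer available, but the Moser-De Giorgi-Nash iteration developed in the stratified setting in \cite{ACM15}, \cite{M15} still applies: bounded weak solutions of linear elliptic equations with bounded source on $(X,g)$ are globally H\"older with some exponent $\nu = \nu(X) \in (0,1)$ depending only on the doubling and Poincar\'e constants of $(X,g)$, which gives $u \in C^{0,\nu}(X)$. To obtain the pointwise gradient estimate on $X \setminus \tub{\eps}$, I use that on any ball $B_r(x) \subset \Omega$ of radius $r \sim d(x,\Sigma) \sim \eps$ the metric is smooth and Einstein, so interior Schauder applied to \eqref{Yeq} together with the global H\"older bound $\mathrm{osc}_{B_r(x)} u \lesssim r^\nu$ yields $|du(x)| \lesssim r^{-1}\mathrm{osc}_{B_r(x)} u \lesssim \eps^{\nu-1}$.

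Finally, \eqref{IPP} follows by cutoff truncation. Choose Lipschitz cutoffs $\chi_\eps$ with $\chi_\eps \equiv 1$ on $X\setminus\tub{\eps}$, $\chi_\eps \equiv 0$ on $\tub{\eps/2}$ and $|d\chi_\eps| \leq C/\eps$, which exist because $d(\cdot,\Sigma)$ is Lipschitz, and integrate by parts on the regular set to get
$$-\int_X \chi_\eps\, u\, \Delta_g u\, d\mu_g = \int_X \chi_\eps\, |du|^2\, d\mu_g + \int_X u\, \langle du, d\chi_\eps\rangle\, d\mu_g.$$
Since the minimal codimension of $\Sigma$ is two, $\vol(\tub{\eps}) = O(\eps^2)$; by Cauchy-Schwarz the last integral is bounded by $\|u\|_\infty \|du\|_{L^2(\tub{\eps})} \|d\chi_\eps\|_{L^2(\tub{\eps})}$, which vanishes as $\eps \to 0$ because $u \in W^{1,2}(X)$ while $\|d\chi_\eps\|_{L^2(\tub{\eps})} = O(1)$. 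Dominated convergence on the other two terms (boundedness of $u$ and $\Delta_g u$, integrability of $|du|^2$) then gives \eqref{IPP}. The main obstacle is really the regularity up to $\Sigma$ in case (ii): one must ensure that the Moser iteration remains uniform across codimension-two strata with angle $>2\pi$, which is precisely the content of the stratified regularity results from \cite{ACM15}, \cite{M15} that we borrow.
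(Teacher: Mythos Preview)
Your overall strategy matches the paper's: rewrite the Yamabe equation as a Schr\"odinger equation with bounded potential (using that $R_g$ is constant) and invoke the stratified regularity theory of \cite{ACM15}, \cite{M15}, \cite{M18}. Part~(i), and your cutoff proof of \eqref{IPP}, are correct; in fact your argument for \eqref{IPP} is slightly cleaner than the paper's, since it uses only $du\in L^2$ and $\Delta_g u\in L^\infty$, not the pointwise gradient blow-up rate.

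The one genuine slip is in (ii). You write that ``the Bochner improvement is no longer available'' once some cone angle exceeds $2\pi$, and then appeal to bare De~Giorgi--Nash--Moser. But the Einstein hypothesis gives a uniform lower Ricci bound on the regular set \emph{regardless} of the cone angles, and the paper uses exactly this---via the Bochner formula applied to $-\Delta_g u = F(u)$ with $F$ locally Lipschitz---to obtain
\[
\Delta_g|du|\ \geq\ -c\,|du|\qquad\text{on }\Omega
\]
(this is Proposition~\ref{regRicciBd}, taken from \cite{M15}). That differential inequality is precisely the extra hypothesis under which Theorem~A of \cite{ACM15} (recorded here as Proposition~\ref{ThmA}) operates, and it is what yields both the sharp H\"older exponent $\nu(X)=\inf_{x\in\Sigma^{n-2}}2\pi/\alpha_x$ and the estimate $\|du\|_{L^\infty(X\setminus\Sigma^{\eps})}\leq C\eps^{\nu(X)-1}$ in one stroke. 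What distinguishes cases (i) and (ii) is not whether Bochner is available, but whether the resulting $\nu(X)$ equals $1$ or lies in $(0,1)$.

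Without the Bochner step, generic De~Giorgi--Nash--Moser on a doubling Poincar\'e space does give \emph{some} H\"older exponent, and your interior-Schauder argument would then produce a matching gradient bound (once you check that the elliptic constants on balls $B_{d(x,\Sigma)/2}(x)$ are uniform after rescaling---this needs bounded curvature and an injectivity-radius lower bound comparable to $d(x,\Sigma)$). So the qualitative conclusions would survive your route. But the exponent obtained is not the explicit $\nu(X)$ of the statement, and the results of \cite{ACM15} you cite do not apply as written without the inequality on $|du|$. The simplest fix is to restore the Bochner step and then quote Proposition~\ref{ThmA}(ii) directly.
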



The first statement (i) was proven in \cite{M18}, Lemma 4.6. 
The integration by parts formula then follows easily by choosing a family of cut-off functions vanishing on an $\eps$-tubular neighborhood $\Sigma^{\eps}$ of the singular set 
and whose norm in $L^2$ tends to zero as $\eps$ goes to zero (see for example $f_{\eps}$ in the proof of Theorem 2.1 in \cite{M17}). 

The rest of this section is devoted to the proof of the second case (ii) of Proposition \ref{reg}. In order to do that, first observe that we can consider the Yamabe equation \eqref{Yeq} as a Schr\"odinger equation on the regular set $\Omega$ 
$$- \Delta_g u = Vu,$$
where 
$$V=a_n^{-1}(Y(X,[g]u^{\frac{4}{n-2}}-R_g)\in L^{\infty}(X).$$
Since $g$ is an Einstein metric, the scalar curvature $R_g$ is constant, 
therefore $V$ belongs to $L^{\infty}(X)$. We can also define the locally Lipschitz function 
$$F(x)=a_n^{-1}(Y(X,[g])x^{\frac{4}{n-2}}-R_g)x,$$ 
and the Yamabe equation can be written as an equation of the form 
$$- \Delta_g u=F(u).$$
The proof of Proposition \ref{reg} combines these two ways of considering the Yamabe equation. 

\subsection*{Regularity for solutions of Schr\"odinger equations} For a compact stratified space $(W^n,h)$ endowed with an iterated edge metric, 
we denote by $\lambda_1(W)$ the first non-zero eigenvalue of the Laplacian $- \Delta_h$ and, following \cite{ACM15}, we define 
\begin{equation*}
\nu_1(W)= \begin{cases}
1 & \mbox{ if } \lambda_1(W)\geq n \\
\mbox{the unique value in $(0,1)$ s.t. } \\
\nu_1(W)(n-1+\nu_1(W)) & \mbox{ if } \lambda_1(W) < n. 
\end{cases}
\end{equation*}
Then, for a compact stratified space $(X^n,g)$, we define 
\begin{equation*}
\nu(X)=\inf_{x \in X}\nu_1(Z_x), 
\end{equation*}
where $Z_x$ is the link at $x$. 

Consider for example a stratified space which only has codimension 2 singularities. Then the links are circles $\s^1_x=(\s^1, (\alpha_x/2 \pi)^2d\theta^2)$ of length $\alpha_x$. Therefore we have 
\begin{equation*}
\nu_1(\s^1_x)=\begin{cases}
1 & \mbox{ if } \alpha_x \leq 2\pi \\
\displaystyle\frac{1}{a_x} & \mbox{ if } \alpha_x =2\pi  a_x > 2\pi. 
\end{cases}
\end{equation*}
In this case we can then express the value $\nu(X)$ in terms of the cone angles $\alpha_x$: 
\begin{equation}
\label{nu}
\nu(X)=\begin{cases}
1 & \mbox{ if } \forall x\in \Sigma \quad  \alpha_x \leq 2\pi,\\
\displaystyle\inf_{x\in \Sigma} \frac{2\pi}{\alpha_x} &\mbox{ if } \exists x\in \Sigma \quad \alpha_x >2\pi. 
\end{cases}
\end{equation}
For the sake of simplicity, we assume that $\nu(X) >0$, that is, there isn't any sequence of singular points for which the angle tends to infinity.

The Lichnerowicz theorem for stratified spaces (Theorem 2.1 in \cite{M17}) ensures that expression \eqref{nu} for $\nu(X)$ holds true whenever the metric has Ricci tensor bounded from below on the regular set. Indeed we have the following:

\begin{lemma}
Let $(X^n,g)$ be a compact stratified space with singular set $\Sigma$. Assume that there exists $k \in \R$ such that $Ric_g\geq k\, g$ on the regular set $X - \Sigma$. 
Then, $\nu(X)$ is given by \eqref{nu}. 
\end{lemma}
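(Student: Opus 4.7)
The plan is to compute $\nu(X)=\inf_{x\in X}\nu_1(Z_x)$ stratum by stratum and show that, under the Ricci lower bound, only points of the codimension two stratum can produce a value strictly smaller than $1$. At a regular point the link is the round sphere $\s^{n-1}$, whose first non-zero eigenvalue equals $n-1$ and hence $\nu_1(\s^{n-1})=1$. At a point of $\Sigma^{n-2}$ the link is the circle $\s^1_x$ of length $\alpha_x$, and the direct computation already recalled in the excerpt gives $\nu_1(\s^1_x)=\min\{1,2\pi/\alpha_x\}$. The content of the lemma is therefore the claim that for every point $x$ of a stratum of codimension $k\geq 3$ the link $Z_x$ satisfies $\nu_1(Z_x)=1$; in view of the definition of $\nu_1$ this reduces to the spectral inequality $\lambda_1(-\Delta_{g_{Z_x}})\geq k-1$.

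The key geometric step is transferring the Ricci lower bound from $g$ to the link metric $g_{Z_x}$. By the iterated edge structure, near a point $x$ of a codimension $k$ stratum the metric is asymptotic to the model $g_0=h+dr^2+r^2 g_{Z_x}$, and a blow-up around $x$ produces the tangent cone $\R^{n-k}\times C(Z_x)$. Since a Ricci lower bound of the form $\mathrm{Ric}_g\geq k\,g$ rescales to an asymptotically non-negative bound, the tangent cone must satisfy $\mathrm{Ric}\geq 0$ on its regular part. A direct computation of the Ricci tensor of the cone $dr^2+r^2g_{Z_x}$ of dimension $k$ yields on the link directions the expression $r^{-2}(\mathrm{Ric}_{Z_x}-(k-2)g_{Z_x})$, whose non-negativity forces
\[
\mathrm{Ric}_{Z_x}\geq (k-2)\,g_{Z_x} \quad \text{on the regular part of } Z_x.
\]

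With this bound in hand, the Lichnerowicz theorem for stratified spaces (Theorem 2.1 in \cite{M17}) applied to the compact stratified space $(Z_x,g_{Z_x})$ of dimension $k-1$ yields
\[
\lambda_1(-\Delta_{g_{Z_x}})\geq \frac{(k-2)(k-1)}{(k-1)-1}=k-1,
\]
which is precisely the threshold forcing $\nu_1(Z_x)=1$. Combining the three cases, $\nu(X)=\inf_{x}\nu_1(Z_x)$ is attained either at $1$ or at a codimension two point with $\alpha_x>2\pi$, contributing $2\pi/\alpha_x$, which coincides with \eqref{nu}.

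The main obstacle I foresee is the rigorous justification of the transfer of the Ricci lower bound from $g$ to $g_{Z_x}$. Since the iterated edge metric only agrees with the model $g_0$ up to a term of order $r^{\gamma}$, and the Ricci tensor involves second derivatives, one has to verify that these lower order corrections cannot spoil the inequality $\mathrm{Ric}_{Z_x}\geq (k-2)g_{Z_x}$ in the rescaled limit; this is a blow-up/tangent cone analysis that one should carry out either by testing the Bochner formula against radially cut-off one-forms supported in the regular part of $Z_x$, or by invoking a sufficient regularity of the edge-cone metric. A secondary technical point is that $Z_x$ itself may carry singular substrata, so the application of the Lichnerowicz theorem must close by induction on the depth of the stratification, with the derived bound on $\mathrm{Ric}_{Z_x}$ serving as the input for the next round.
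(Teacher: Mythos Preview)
Your proposal is correct and follows essentially the same approach as the paper: reduce to showing $\nu_1(Z_x)=1$ for links of codimension $\geq 3$, transfer the Ricci lower bound from $g$ to the link via the tangent cone (obtaining $\mathrm{Ric}_{Z_x}\geq (\dim Z_x-1)\,g_{Z_x}$), and then apply the singular Lichnerowicz theorem. The technical obstacle you flag---the rigorous passage from the iterated-edge asymptotics to the Ricci bound on the link---is exactly what the paper dispatches by citing Lemma~2.1 in \cite{M15}, so no further induction or blow-up analysis is needed beyond that reference.
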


\begin{proof}
The lower bound on the Ricci tensor on the regular set implies that for any $x \in \Sigma$ the tangent cone at $x$ is Ricci flat on its regular set, and in particular any link $Z_x=(Z, k_x)$ is such that $Ric_{k_x}\geq (\mbox{dim}(Z) -1)\, k_x$ on the regular set of $Z$ (see Lemma 2.1 in \cite{M15}). In particular, when $\mbox{dim}(Z)> 1$, that is, when $x$ belongs to a singular stratum of codimension greater than two, we can apply Lichnerowicz theorem to $(Z,k_x)$ and get $\lambda_1(Z_x)\geq \mbox{dim}(Z)$. As a consequence, whenever $x$ belongs to a singular stratum of codimension greater than two, $\nu(Z_x)=1$ and therefore $\nu(X)$ only depends on angles at the codimension 2 stratum. This proves \eqref{nu}. 
\end{proof}


By combining Theorem A in \cite{ACM15} and Moser's iteration technique (see Lemma 1.16 in \cite{M15}), 
we obtain the following result for weak solutions of a Schr\"odinger equation:
\begin{prop}
\label{ThmA}
Let $(X^n,g)$ be a compact stratified space endowed with an iterated edge metric and $V\in L^{q}(X)$ for $q > \frac{n}{2}$. 
Assume that $u \in W^{1,2}(X)\cap L^{\infty}(X)$ is a weak solution of 
\begin{equation}
\label{shrod}
- \Delta_g u =Vu
\end{equation}
and moreover there exists a constant $c>0$ such that 
\begin{equation*}
\label{grad}
\Delta_g |du| \geq - c |du| \mbox{ on } \Omega.
\end{equation*}
Then for any $\eps >0$ we have 
\begin{itemize}
\item[(i)] if $\nu(X) = 1$ and $V \in L^{\infty}(X)$, there exists a positive constant $C$ such that 
\begin{equation*}
||du||_{L^{\infty}(X \setminus \tub{\eps})}\leq C\sqrt{|\ln(\eps)|},
\end{equation*}
\item[(ii)] if $\nu(X) \in (0,1)$ and $V \in L^{\infty}(X)$, then $u \in C^{0,\nu}(X)$ for $\nu=\nu(X)$ and there exists a positive constant $C$ such that 
\begin{equation*}
||du||_{L^{\infty}(X \setminus \tub{\eps})}\leq C \eps^{\nu -1}, 
\end{equation*}
\item[(iii)] if $\nu(X) \in (0,1]$ and $q \in \left(\frac n2,\infty \right)$, then $u \in C^{0,\mu}(X)$ for 
$$\mu= \min\left\{ \nu(X), 1- \frac{n}{2q}\right\}, $$
and there exists a positive constant such that
\begin{equation*}
||du||_{L^{\infty}(X \setminus \tub{\eps})}\leq C \eps^{\mu -1}.
\end{equation*}
\end{itemize}
\end{prop}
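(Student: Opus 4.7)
The plan is to combine two complementary ingredients. The first is Theorem~A of \cite{ACM15}, which produces a Hölder estimate for weak solutions $u$ of Schrödinger-type equations on a compact stratified space, with a Hölder exponent governed by the spectral invariant $\nu(X)$ of the links. The second is a Moser iteration in the spirit of Lemma~1.16 of \cite{M15}, applied on the regular set $\Omega$ not to $u$ itself but to the non-negative function $|du|$, which by hypothesis satisfies the linear subsolution inequality $\Delta_g|du|\geq -c|du|$ with bounded zero-order coefficient. The underlying idea is that the Hölder bound on $u$ controls its oscillation at scale $\eps$ away from $\Sigma$, and the mean value inequality for $|du|$ translates this into a pointwise estimate on the gradient.

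First, apply Theorem~A of \cite{ACM15} to $u$. In case~(ii), with $V\in L^\infty$ and $\nu(X)\in(0,1)$, it directly gives $u\in C^{0,\nu(X)}(X)$. In case~(iii), the weaker assumption $V\in L^q$ with $q>n/2$ couples the spectral exponent with the Morrey exponent $1-n/(2q)$ and yields $u\in C^{0,\mu}(X)$ with $\mu=\min\{\nu(X),1-n/(2q)\}$. In case~(i), where $\nu(X)=1$ is borderline, the same theorem provides $u\in C^{0,\alpha}(X)$ for every $\alpha<1$, which will suffice for the logarithmic estimate below.

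Next, since $|du|$ is a non-negative weak subsolution of a linear second-order operator with bounded zero-order term, Moser iteration on any ball $B_r(x)\subset\Omega$ yields the mean value inequality
\[
 \sup_{B_{r/2}(x)} |du| \;\le\; C\, r^{-n/2}\, \|du\|_{L^2(B_r(x))}.
\]
Fix $x\in X\setminus \tub{\eps}$ and apply this with $r=\eps/2$, so that $B_r(x)\subset\Omega$. A standard Caccioppoli inequality applied to $u-u(x)$, combined with the Hölder control $\mathrm{osc}_{B_{\eps}(x)}(u)\le C\eps^{\mu}$ from Step~1, gives
\[
 \|du\|_{L^2(B_r(x))}^2 \;\le\; C\eps^{-2}\,\mathrm{osc}_{B_{\eps}(x)}(u)^2\,\vol(B_\eps(x)) \;\le\; C\eps^{2\mu+n-2},
\]
whence $|du|(x)\le C\eps^{\mu-1}$. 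This settles cases~(ii) and~(iii).

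The main obstacle is case~(i), where the exponent $\mu=1$ sits on the borderline and the naive Caccioppoli argument above only produces a constant bound rather than the correct logarithmic blow-up $\sqrt{|\ln\eps|}$. To capture the logarithm, one has to refine the iteration along a dyadic sequence of balls approaching $\Sigma$, summing the borderline scale-by-scale contributions and exploiting the full strength of the critical Hölder regularity $C^{0,\alpha}$ for every $\alpha<1$, exactly as in the borderline case of Theorem~A of \cite{ACM15}. Once this refinement is in hand, the combination with the Moser iteration of \cite{M15} yields the stated estimate in case~(i) and completes the proof.
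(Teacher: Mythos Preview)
Your proposal is correct and follows exactly the approach the paper indicates: the paper does not give a detailed proof of this proposition but simply states that it is obtained ``by combining Theorem~A in \cite{ACM15} and Moser's iteration technique (see Lemma~1.16 in \cite{M15})'', which is precisely your two-step strategy of H\"older control on $u$ followed by a mean value/Caccioppoli argument for $|du|$. Your write-up is in fact more explicit than the paper's one-line justification.
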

In cases (ii) and (iii), we deduce the integration by parts formula \eqref{IPP} for a weak solution $u$ of \eqref{shrod}. 

\begin{lemma}
\label{LemmaIPP}
Let $(X^n,g)$ be a compact stratified space with singular set $\Sigma$, 
endowed with an iterated edge-cone metric.  
Assume that there exists $x$ in the stratum of codimension two such that $\alpha_x >2\pi$ and that $\nu(X) > 0$. 
For $u\in W^{1,2}(X)\cap L^{\infty}(X)$  satisfying the same assumptions of Proposition \ref{ThmA}, the integration by parts formula \eqref{IPP} holds. 
\end{lemma}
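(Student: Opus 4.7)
The plan is to test the weak formulation of $-\Delta_g u = Vu$ against $u$ itself, suitably approximated by cut-offs supported away from $\Sigma$, and then pass to the limit. Since $V \in L^\infty(X)$ and $u \in W^{1,2}(X) \cap L^\infty(X)$, the distributional product $u \Delta_g u = -Vu^2$ lies in $L^1(X)$, so \eqref{IPP} is equivalent to
\begin{equation*}
\int_X |du|^2 \, d\mu_g = \int_X V u^2 \, d\mu_g.
\end{equation*}
First I would fix a standard family of Lipschitz cut-offs $\eta_\eps$ built from the distance to $\Sigma$, with $\eta_\eps \equiv 0$ on $\tub{\eps/2}$, $\eta_\eps \equiv 1$ outside $\tub{\eps}$, $0 \leq \eta_\eps \leq 1$, and $|d\eta_\eps| \leq C/\eps$. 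Then $u\eta_\eps^2 \in W^{1,2}_0(\Omega)$ is an admissible test function, and the product rule $d(u\eta_\eps^2) = \eta_\eps^2 du + 2u\eta_\eps d\eta_\eps$ produces the identity
\begin{equation*}
\int_\Omega \eta_\eps^2 |du|^2 \, d\mu_g + 2 \int_\Omega u \eta_\eps \langle du, d\eta_\eps \rangle\, d\mu_g = \int_\Omega V u^2 \eta_\eps^2 \, d\mu_g.
\end{equation*}

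As $\eps \to 0$, the first term on the left tends to $\int_X |du|^2 \, d\mu_g$ by monotone convergence, and the right-hand side tends to $\int_X V u^2\, d\mu_g$ by dominated convergence, since $u, V \in L^\infty$. The only delicate contribution is the cross-term
\begin{equation*}
I_\eps := \int_\Omega u \eta_\eps \langle du, d\eta_\eps \rangle\, d\mu_g,
\end{equation*}
whose vanishing as $\eps \to 0$ is the crux of the argument. Here Proposition \ref{ThmA} enters: in case (ii) it provides the pointwise bound $\|du\|_{L^\infty(X \setminus \tub{\eps/2})} \leq C\eps^{\nu - 1}$, with the analogous bound of exponent $\mu - 1$ in case (iii). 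Combining this with $\|d\eta_\eps\|_{L^\infty} \leq C\eps^{-1}$ and the codimension two volume estimate $\vol(\tub{\eps}) \leq C\eps^2$ gives
\begin{equation*}
|I_\eps| \leq \|u\|_{L^\infty} \cdot C\eps^{\nu-1} \cdot C\eps^{-1} \cdot C\eps^2 = C\eps^\nu,
\end{equation*}
which tends to zero by the standing assumption $\nu(X) > 0$. Any higher codimension strata contribute analogous terms of order $\eps^{\nu + k - 2}$ with $k \geq 3$, hence are also negligible.

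The main difficulty lies precisely in this cross-term: when $\nu(X) < 1$ the gradient $|du|$ blows up like $r^{\nu - 1}$ near the codimension two stratum, so $I_\eps$ must be controlled through a precise balance between this blow-up rate, the gradient $|d\eta_\eps|\sim 1/\eps$ of the cut-off, and the two-dimensional tubular volume $\sim \eps^2$. The sharp pointwise estimate of Proposition \ref{ThmA} is tailored exactly to this balance; a cruder $L^2$ estimate $|I_\eps| \leq \|u\|_\infty \|du\|_{L^2(\tub{\eps})}\|d\eta_\eps\|_{L^2}$ would also close the argument via absolute continuity of $|du|^2 \in L^1(X)$, but the pointwise route is cleaner. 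Once $I_\eps \to 0$ is established, passing to the limit in the identity above yields \eqref{IPP}.
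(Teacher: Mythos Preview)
Your proof is correct and follows essentially the same strategy as the paper: control the error term arising from an $\eps$-tubular neighbourhood of $\Sigma$ by combining the gradient bound $\|du\|_{L^\infty(X\setminus\tub{\eps})}\leq C\eps^{\nu-1}$ from Proposition~\ref{ThmA} with the codimension-two volume (or area) estimate, producing a term of order $\eps^{\nu}\to 0$. The only cosmetic difference is that the paper applies Green's formula on $X\setminus\tub{\eps}$ and bounds the boundary integral $\int_{\partial\tub{\eps}} u\langle du,N\rangle\,d\sigma_g$, whereas you work entirely in the weak formulation with cut-offs and bound the cross-term $I_\eps$; the arithmetic is identical.
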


\begin{proof}
Fix $\eps >0$ and consider an $\eps$-tubular neighbourhood $\tub{\eps}$ of the singular set. Then we can write: 
\begin{equation}
\label{boundary}
\int_{X}|du|^2d\mu_g= \lim_{\eps \rightarrow 0}\left( - \int_{X \setminus \tub{\eps}} u\Delta_{g} u d\mu_g + \int_{\partial \tub{\eps}}u \langle du,N \rangle d\sigma_g \right),
\end{equation}
where $N$ is the unit outward normal of $\partial \tub{\eps}$. We are going to show that the second term in the previous limit tends to zero as $\eps$ goes to zero. In order to do that, observe that $u$ is bounded and the norm in $L^{\infty}$ of its gradient is controlled  by a constant times $\eps^{\beta}$ away from $\Sigma^{\eps}$. 
The exponent $\beta$ is either equal to $\nu(X)$ or to $\mu$; in both cases $0 <\beta <1$. By using this and Cauchy-Schwarz inequality we get the estimate 
\begin{equation*}
\int_{\partial\tub{\eps}} u \langle du,N \rangle d\sigma_g \leq ||u||_{\infty}\eps^{\beta-1}\vol_{\sigma_g}(\partial \tub{\eps})\leq c \eps^{\beta}, 
\end{equation*}
for some positive constant $c$. 
Here we used that since $\Sigma$ has minimal codimension 2, the volume of the boundary of an $\eps$-tubular neighbourhood is bounded by a constant times $\eps$.
 Since $\beta >0$, by taking the limit in \eqref{boundary} when $\eps$ goes to zero we obtain the desired equality \eqref{IPP}. 
\end{proof}

\subsection*{Proof of Proposition \ref{reg}} 
We observed at the beginning of this section that, in the Einstein case, a Yamabe minimizer $u$ is a weak solution of a Schr\"odinger equation for $V \in L^{ \infty}(X)$. Hence, in order to prove Proposition \ref{reg}, it suffices to show that the gradient of $u$ satisfies inequality 
$$\Delta_g |du |\geq - c|du|$$
on the regular set,  for some positive constant $c$. 

The following is proven in Proposition 2.3 in \cite{M15}, that we recall here: 
\begin{prop}
\label{regRicciBd}
Let $(X^n,g)$ be a compact stratified space endowed with an iterated edge metric such that for some $k \in \R$ the inequality ${\rm Ric}_g\geq k\,g$ holds on the regular set. Let $F$ be a locally Lipschitz function on $\R$ and $u$ a non-negative function in $W^{1,2}(X)\cap L^{\infty}(X)$ which is a weak solution of 
\begin{equation}
\label{locLip}
- \Delta_g u = F(u)
\end{equation}
Then there exists a positive constant $c$ such that $\Delta_g|du| \geq - c|du|$ on the regular set. 
\end{prop}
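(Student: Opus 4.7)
The entire argument takes place on the regular set $\Omega$, where $g$ is a smooth Riemannian metric and classical calculus identities are available. Since $u$ is bounded and $F$ is locally Lipschitz on $\R$, $F(u) \in L^{\infty}(\Omega)$, so standard interior elliptic bootstrap applied to $-\Delta_g u = F(u)$ gives $u \in W^{2,p}_{\mathrm{loc}}(\Omega)$ for every $p < \infty$ and hence $u \in C^{1,\alpha}_{\mathrm{loc}}(\Omega)$; this regularity suffices to make sense (a.e.\ or distributionally) of all expressions that follow. The key tool is the classical Bochner formula on $\Omega$:
\begin{equation*}
\tfrac{1}{2}\Delta_g |du|^2 \;=\; |\nabla^2 u|^2 \,+\, g(d\Delta_g u,du) \,+\, {\rm Ric}_g(du,du).
\end{equation*}

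Next, I would combine Bochner with the algebraic identity $\Delta_g(\phi^2) = 2\phi\,\Delta_g\phi + 2|\nabla\phi|^2$ applied to $\phi = |du|$, and with Kato's inequality $|\nabla|du||^2 \leq |\nabla^2 u|^2$. Together these give, pointwise on the open set where $|du| > 0$,
\begin{equation*}
|du|\,\Delta_g|du| \;\geq\; g(d\Delta_g u,du) \,+\, {\rm Ric}_g(du,du).
\end{equation*}
The Ricci lower bound ${\rm Ric}_g \geq k\,g$ controls the second term by $k|du|^2$. For the first term, the equation gives $\Delta_g u = -F(u)$, whence $d\Delta_g u = -F'(u)\,du$ (weakly, with $F'(u)$ defined a.e.\ via Rademacher); since $u$ takes values in a bounded interval and $F$ is Lipschitz there, $|F'(u)| \leq L$ for some $L > 0$, so $g(d\Delta_g u,du) \geq -L|du|^2$. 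Combining everything yields
\begin{equation*}
\Delta_g|du| \;\geq\; (k-L)|du| \;=:\; -c\,|du|
\end{equation*}
on $\{|du| > 0\}$, which is the desired inequality with $c := L + \max(-k,0)$.

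The last technical step is to extend this across the critical set $\{|du| = 0\}$ and to interpret the inequality in the weak sense on all of $\Omega$. The standard device is to replace $|du|$ by the smoothed quantity $\sqrt{|du|^2 + \varepsilon^2}$, redo the Bochner computation for it (which is meaningful everywhere since $|du|^2 + \varepsilon^2 > 0$), and then send $\varepsilon \to 0$. The main obstacle I expect is precisely the low regularity of $F$: since $F$ is only locally Lipschitz, $u$ is not a priori $C^3$ and a naive pointwise Bochner identity is unavailable. This is handled either by mollifying $F$ to $F_\delta \in C^{\infty}$, running the argument for the corresponding solutions, and passing to the limit $\delta \to 0$ using the uniform Lipschitz constant $L$; or by invoking the chain rule for Sobolev functions applied to $F(u) \in W^{1,p}_{\mathrm{loc}}$, which suffices to justify $d\Delta_g u = -F'(u)\,du$ in the distributional sense used above.
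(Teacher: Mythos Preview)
Your argument via the Bochner formula, Kato's inequality, and the Ricci lower bound is correct and is the standard route to this inequality. Note that the paper does not actually supply a proof of this proposition: it merely quotes the statement from \cite{M15} (Proposition~2.3 there), so there is no in-paper proof to compare against. The argument in \cite{M15} is the same Bochner--Kato computation you outline, so your proposal matches the cited source.

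One small remark on the regularity discussion: once you have $u\in C^{1,\alpha}_{\mathrm{loc}}(\Omega)$, the chain rule for Sobolev functions applied to the locally Lipschitz $F$ gives $F(u)\in W^{1,p}_{\mathrm{loc}}$ for all $p<\infty$, and hence $u\in W^{3,p}_{\mathrm{loc}}$ by interior elliptic regularity. This already suffices to justify Bochner and Kato almost everywhere, so the mollification $F_\delta$ is not strictly needed; your $\sqrt{|du|^2+\varepsilon^2}$ device remains the clean way to cross the critical set and obtain the inequality in the weak sense on all of $\Omega$.
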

As we stated above, the Yamabe equation can be seen as an equation of the form \eqref{locLip}. 
As a consequence, the assumptions of Proposition \ref{reg} ensure that we can apply both Proposition \ref{regRicciBd} and Proposition \ref{ThmA}. We then get that a Yamabe minimizer belongs to the H\"older space $C^{0,\nu}(X)$ for $\nu=\nu(X)$ given by expression \eqref{nu}. Moreover, Lemma \ref{LemmaIPP} proves the integration by parts formula. 

This concludes the proof of Proposition \ref{reg}. 
\begin{rem}
If we denote by $L_g = - a_n\Delta_{g} + R_{g}$ the conformal Laplacian associate to the metric $g$, Proposition \ref{reg} implies in particular the following equality, which will be useful in the following
\begin{equation}
\label{confIPP}
\int_{X}uL_gud\mu_g=E_g(u)=\int_{X}(a_n|du|^2 + R_g u^2)d\mu_{g}.
\end{equation}
\end{rem}

\section{Singular spheres of angle smaller than $2\pi$}
We consider the standard edge-cone sphere $\s^n_a = (\s^n, h_a)$ for $a \leq 1$. 
In this case, $h_a$ is a Yamabe metric and we are going to show that the behavior of its conformal class is analog to the one of the round metric $h_1 = g_{\mathbb{S}}$. 
For the smooth round sphere $(\s^n,h_1)$, any Yamabe metric, not homothetic to $h_1$, is obtained from $h_1$ by a conformal diffeomorphism (see \cite{O71} or Proposition 3.1 in \cite{LP87}).

Now consider a Yamabe minimizer $u$ in the conformal class of $h_a$, $h=u^{\frac{4}{n-2}}h_a$. 
We can eventually apply Proposition \ref{reg} to $u$ and obtain that it is a Lipschitz function. 
This, together with Corollary 4.8 of \cite{M18} allows us to prove the following: 

\begin{prop}
Let $a \leq 1$. 
If there exists a conformal metric $h \in [h_a]$ with constant scalar curvature, 
then there exists a conformal diffeomorphism $\varphi$ of $(\s^n,h_a)$, 
preserving the singular set, such that $h=\varphi^*h_a$, up to a constant multiple . 
As a consequence, the Yamabe functional on $(\s^n, h_a)$ is minimized by constant multiples of $h_a$ and its images under conformal diffeomorphism. 
\end{prop}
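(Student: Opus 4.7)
The plan is to follow the standard strategy used on the smooth round sphere (see Lee--Parker \cite{LP87}, Proposition 3.1), suitably adapted to the edge-cone setting: transfer the problem to $(\R^n, g_a)$ via stereographic projection from a point on the singular circle, and invoke an appropriate classification of positive solutions to the Yamabe equation there.

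First I would write $h = u^{\frac{4}{n-2}} h_a$ for a positive function $u$ on the regular set. The constant scalar curvature assumption means $u$ is a positive weak solution of a Yamabe-type equation $L_{h_a} u = c\, u^{\frac{n+2}{n-2}}$ with $c > 0$ (positivity of $c$ follows from $Y(\s^n,[h_a]) = a^{\frac{2}{n}} {\bf Y}_n > 0$). Once I know $u \in W^{1,2}(\s^n) \cap L^\infty(\s^n)$ (an $\eps$-regularity argument in the spirit of \cite{ACM14} reduces this to a Moser iteration), the hypotheses of Propositions \ref{regRicciBd} and \ref{ThmA} are satisfied with $\nu(\s^n_a) = 1$ because $a \leq 1$. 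Exactly as in Proposition \ref{reg}(i), this yields that $u$ is Lipschitz and that the integration by parts formula \eqref{IPP} holds. In particular, $h$ is well enough behaved to be interpreted as a Yamabe-type metric.

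Next I would apply the stereographic projection $\Phi : \R^n \to \s^n_a \setminus \{N\}$ based at a point $N \in \Sigma$, and use the conformal equivalence
\[
\Phi^* h_a = \left( \tfrac{2}{1+|x|^2+r^2} \right)^{2} g_a
\]
recorded in Section 1. Pulling back $h$ yields a conformal metric $v^{\frac{4}{n-2}} g_a$ on $(\R^n, g_a)$ with $v$ a positive solution of the Yamabe equation there. At this point I would invoke Corollary 4.8 of \cite{M18}, which classifies positive solutions of the Yamabe equation on $(\R^n, g_a)$ (the edge-cone analog of the Caffarelli--Gidas--Spruck classification): up to constant multiples, they form the one-parameter family of ``standard bubbles'' obtained from the distinguished solution by the dilations $(x,r) \mapsto (\lambda x, \lambda r)$ and the translations along the singular $\R^{n-2}$. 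Each such bubble corresponds, via $\Phi^{-1}$, to a conformal diffeomorphism of $(\s^n, h_a)$ fixing $N$ and preserving the singular circle $\Sigma$. This yields the desired $\varphi$ with $h = \varphi^* h_a$ up to a constant multiple, and the ``as a consequence'' statement follows because $h_a$ is already a Yamabe metric by Corollary 3.2 of \cite{M17}.

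The main obstacle is the classification step on $(\R^n, g_a)$, which is an edge-cone version of a moving planes argument in the singular geometry; however, this is precisely the content of the cited Corollary 4.8 of \cite{M18}, so my task reduces to verifying the regularity hypotheses that allow its application and to carefully tracking conformal factors under the stereographic projection. A secondary technical point is to rule out the degenerate case $c = 0$ (negative or zero Yamabe constant), which however does not occur since $Y(\s^n,[h_a]) > 0$ for all $a > 0$.
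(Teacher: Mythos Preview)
Your proposal rests on a misreading of Corollary~4.8 of \cite{M18}. That result is \emph{not} a Caffarelli--Gidas--Spruck classification of positive solutions to the Yamabe equation on $(\R^n, g_a)$; it is the endpoint of the singular Obata theorem, asserting that a compact stratified space on which the Obata rigidity is saturated is isometric to a spherical suspension $([0,\pi]\times X,\, dt^2+\sin^2 t\, g)$ over some lower-dimensional Einstein stratified space $(X,g)$. No moving-planes argument on $(\R^n, g_a)$ is provided by the cited references, and carrying one out across an edge singularity would be a substantial new piece of work, not a citation. So the step on which your whole argument hinges is unsupported.

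The paper's proof stays on $(\s^n, h_a)$ and proceeds differently. After using Proposition~\ref{reg}(i) to obtain that $u$ is Lipschitz, it applies the singular Obata theorem (Theorem~4.3 of \cite{M18}) to conclude that $h = u^{4/(n-2)} h_a$ is itself Einstein, and then Corollary~4.8 to produce an isometry $\varphi$ from $(\s^n, h)$ to a suspension $([0,\pi]\times X,\, dt^2+\sin^2 t\, g)$ over an a~priori unknown link $X$. The remaining work---and this is where the Lipschitz regularity is actually used---is to identify $X$. One compares tangent cones: at the tip $P = X\times\{0\}$ the tangent cone is $C(X)$; at $x=\varphi^{-1}(P)\in\Sigma$ the tangent cone with respect to $h_a$ is $C(\s^{n-1}_a)$; and the Lipschitz bound on the conformal factor forces the tangent cone at $x$ with respect to $h$ to coincide with that for $h_a$. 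Uniqueness of the tangent cone then gives $X\cong\s^{n-1}_a$, hence $(\s^n,h)$ is isometric to $(\s^n,h_a)$, and the isometry is the desired conformal diffeomorphism. The stereographic picture and the bubble description you sketch are recovered \emph{after} the proposition, as a consequence of it, not as its proof.
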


\begin{proof} We already know that $h_a$ minimizes the Yamabe functional when $a \leq 1$. 
Therefore it suffices to show the existence of the desired conformal diffeomorphism. 
Let $h= u^{\frac{4}{n-2}}h_a$ be a conformal metric with constant scalar curvature. 
Up to a constant multiple, we may assume that $R_h=n(n-1)$. 
By Theorem 4.3 in \cite{M15}, the metric $h$ is an Einstein metric. 
Moreover, by Corollary 4.8 in \cite{M15}, 
$(\s^n, h)$ is isometric to a spherical suspension: 
that is, there exists a stratified space $X$ of dimension $(n-1)$, with singularities of dimension $(n-3)$ and angles smaller than $2\pi$, 
endowed with an Einstein metric $g$, and an isometry $\varphi$ from $(\s^n, h)$ to the warped product $(X\times [0,\pi], dt^2+\sin^2t\,g)$. 
The isometry $\varphi$ preserves the singular set, meaning that it sends $\Sigma^{n-2}$ onto $X^{\tiny{sing}}\times [0,\pi]$. 
We know that the the point $P= X \times \{0\}$ belongs to the singular set of $X\times [0,\pi]$ 
and by the proof of the Obata singular theorem in \cite{M18} the tangent cone at $P$ is a cone over $X$. 
More explicitly, the truncated tangent cone at $P$ is given by the following Gromov-Hausdorff limit: 
$$(C_{[0,1)}(X), ds^2+s^2g)=\lim_{\eps\rightarrow 0}(B_{\eps}(P), \eps^{-2}(dt^2+\sin^2t\,g), P),$$
Moreover, the convergence is in $C^{\infty}_{loc}$ on the regular sets. 
Now consider $x = \varphi^{-1}(P)\in \Sigma^{n-2}$. 
Since $\varphi$ is an isometry, the truncated tangent cone at $x$ with respect to the metric $h$ is isometric to $C_{[0,1)}(X)$: 
$$\lim_{\eps\rightarrow 0}(B_{\eps}(x), \eps^{-2}h, x)=(C_{[0,1)}(X), ds^2+s^2g).$$
We also know that the tangent cone at $x$ with respect to the metric $h_a$ is a cone over $\s^{n-1}_a$. 
Indeed, $h_a$ is isometric to the warped product metric $d\rho^2+\sin^2\rho\,h^{n-1}_a$ on $[0,\pi]\times \s^{n-1}$. 
On the regular part of the ball $B_{\eps}(x)$, we can consider the change of coordinates $\rho=\eps r$ and as $\eps$ goes to zero we obtain
$$\eps^{-2}h_a = \eps^{-2}(\eps^2dr^2+\sin^2(\eps r)h_a^{n-1}) \rightarrow dr^2+r^2h_a^{n-1}.$$
For the conformal metric $h=\phi h_a$, with $\phi=u^{\frac{4}{n-2}}$,
we know that $\phi$ is a Lipschitz and bounded function, thanks to Proposition \ref{reg} (i). Therefore, for any $z \in B_{\eps}(x)$, we have 
$$|\phi(z)-\phi(p)|\leq C \eps.$$
As a consequence, when considering the limit as $\eps$ goes to zero of the metric $\eps^{-2}h$, 
we have, with the same change of coordinates as above, on the regular part of $B_{\eps}(x)$: 
$$\eps^{-2}h=\eps^{-2}\phi h_a \rightarrow dr^2+r^2h_a^{n-1}.$$
But the tangent cone at $x$ is unique, and therefore $C(X^{\mbox{\tiny reg}})$ and $C(\s^{n-1,\mbox{\tiny reg}}_a)$ must be isometric. Moreover, the convergence is in the pointed Gromov-Hausdorff sense, then the isometry sends the vertex of $C(X^{\mbox{\tiny reg}})$ to the vertex of $C(\s^{n-1,\mbox{\tiny reg }}_a)$. Now, both $s$ and $r$ are the distances from the vertices of the cone, and therefore each slice $\{s\}\times X^{\mbox{\tiny reg}}$ is isometric to a slice $\{r\}\times \s^{n-1,\mbox{\tiny reg}}_a$. By taking the metric completions of the regular set, we deduce that there is an isometry between $X$ and $\s^{n-1}_a$ preserving the singular sets. As an immediate consequence $(\s^n, h)$ is isometric to $(\s^n, h_a)$. More precisely, we have shown that there exists an isometry $\varphi : (\s^n, h)\rightarrow (\s^n, h_a)$ which preserves the singular setting. Since the pullback $\varphi^*h_a$ is conformal to $h_a$, $\varphi$ is the conformal diffeomorphism of the sphere $(\s^n, h_a)$ that we were looking for. Therefore, for any conformal metric $h$ of constant scalar curvature there exists a conformal diffeomorphism of $(\s^n, h_a)$, preserving the singular set, such that $h =\varphi^*h_a$, as we wished. \end{proof}


This result allows us to express the conformal factor of any metric $h$ minimizing the Yamabe functional on $(\s^n,h_a)$ 
in terms of functions defined in $(\R^{n-2}\times C(\s^1),g_a)$, in the same way as in the smooth case. 
Indeed, consider $N \in \Sigma$ and the stereographic projection from $\s^n - \{N\}$ to $\R^n$. 
Denote by $\Phi$ its inverse; as we noticed above, the stereographic projection is as usual a conformal diffeomorphism such that 
$$\Phi^*h_a = 4u^{\frac{4}{n-2}}g_a,$$
where we denote 
$$u(x,r)=(1+|x|^2+r^2)^{\frac{2-n}{2}}.$$
Consider the conformal diffeomorphism $\varphi$ constructed in the previous Proposition and define 
$$ \psi: \R^n \rightarrow \R^n, \quad \psi = \pi \circ \varphi \circ \Phi.$$
Then $\psi$ is a conformal diffeomorphism of $(\R^n,g_a)$ preserving the singular set $\mathcal{S}=\R^{n-2}\times \{0\}$. Now, any conformal diffeomorphism of the Euclidean space preserves the singular set, so $\psi$ can be expressed as the composition of a rotation, a translation $\tau_v$ in the direction $v$ and a dilation $\delta_{\lambda}(x)=\lambda x$. 
It is then possible to write, as in the smooth case: 
\begin{equation*}
\Phi^*\psi^*h_a = 4u_{\lambda,v}^{\frac{4}{n-2}}g_a, \end{equation*}
where $u_{\lambda,v}=u\circ\tau_v\circ\delta_{\lambda}$. This gives all Yamabe minimizers on the edge-cone sphere $\s^n_a = (\s^n, h_a)$.


\section{Singular spheres of cone angle greater than or equal to $4\pi$ and non-existence}
This section is devoted to proving that a non trivial Yamabe minimizer does not exists on standard edge-cone spheres $\s^n_a = (\s^n, h_a)$ with $a \geq 2$. 
The proof is by contradiction and it relies on Theorem \ref{localYamabe} and on a lemma by Aubin \cite{Au76} for the Yamabe constant of Riemannian normal coverings (see \cite{AN07} for the case of non-normal coverings). 

{\it Aubin's lemma} states that if $(M^n,g)$ is a smooth compact manifold with positive Yamabe constant, 
and $M_k$ a finite covering of order $k > 1$ of $M$, then the Yamabe constants of $M_k$ is strictly larger than the one of $M$. 
Now, if we consider $a \geq 2$ and $b=a/2$, $\s^n_a = (\s^n, h_a)$ is a double branched cover of $\s^n_b = (\s^n, h_b)$.
 If we assume the existence of a Yamabe minimizer,
 the regularity and integration by parts formula proven in Proposition \ref{reg} will allow us to show Aubin's lemma in the setting of singular spheres. As a consequence, we will get 
$$Y(\s^n, [h_a])>Y(\s^n, [h_b]).$$
But we know that for $a,b \geq 1$ both of the Yamabe constant coincide with the one of the round sphere. 
Therefore, we will obtain a contradiction and a Yamabe minimizer cannot exist. 

In the following, we give the details of the proof of the Main Theorem.

\begin{theorem}
Let $(\s^n, h_a)$ be the standard edge-cone $n$-sphere with an edge-cone singularity of cone angle $\alpha = 2\pi a$ along a great circle $\s^{n-2}$ for some $a \geq 2$. 
Then, there is no Yamabe minimizer on $(\s^n, h_a)$.  
\end{theorem}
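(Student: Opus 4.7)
My plan is to argue by contradiction. Suppose $u$ is a Yamabe minimizer on $(\s^n,[h_a])$; the strategy is to push it down along the double branched cover $\pi:\s^n_a \to \s^n_b$, where $b = a/2 \geq 1$ and $\pi$ is obtained in the doubly warped description by $\theta \mapsto 2\theta$ on the $\s^1$-factor, branched along $\Sigma^{n-2}$. The map $\pi$ is a local isometry of degree $2$ on the regular set $\Omega$, with involution deck transformation $\sigma$ (rotation by $\pi$ in $\theta$), and $d\mu_a = \pi^* d\mu_b$ on $\Omega$. By the Corollary in Section~1 and the subsequent Lemma, $Y(\s^n,[h_a]) = Y(\s^n,[h_b]) = {\bf Y}_n$ whenever $a,b\geq 1$, so if I can produce a test function $\tilde v$ on $\s^n_b$ with $Q_b(\tilde v) < {\bf Y}_n$, that contradicts $Y(\s^n,[h_b]) = {\bf Y}_n$.

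The first step is to normalize $\|u\|_p = 1$, so that $E_a(u) = {\bf Y}_n$, and to invoke Proposition~\ref{reg}(ii) together with Lemma~\ref{LemmaIPP}: these give $u \in C^{0,\nu}(\s^n) \cap W^{1,2}(\s^n)$ with $\nu = 1/a$, the integration by parts formula~\eqref{IPP}, and enough regularity to justify the computation below; moreover, positivity of Yamabe minimizers ensures $u > 0$ on $\Omega$. Next I define the $L^p$-symmetrized test function
$$v = \bigl(u^p + (\sigma^* u)^p\bigr)^{1/p} \quad \text{on } \s^n_a,$$
which is $\sigma$-invariant and descends as $v = \pi^* \tilde v$ to a function $\tilde v$ on $\s^n_b$. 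The local isometry property together with the factor $2$ between the volume forms give $\|\tilde v\|_{p,b}^p = 1$ and $E_b(\tilde v) = \tfrac12 E_a(v)$; it therefore suffices to prove the strict estimate $E_a(v) < 2 E_a(u) = 2 {\bf Y}_n$.

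Two pointwise inequalities drive this. For the gradient term, differentiating $v^p = u^p + (\sigma^* u)^p$ and applying Cauchy--Schwarz together with the subadditivity $s^r + t^r \leq (s+t)^r$ (valid for $r = 2(p-1)/p = (n+2)/n \geq 1$, applied with $s = u^p$ and $t = (\sigma^*u)^p$) yields
$$|dv|^2 \leq |du|^2 + |d\sigma^*u|^2 \quad \text{on } \Omega.$$
For the potential term, the strict concavity of $x \mapsto x^{2/p}$ (since $2/p = (n-2)/n < 1$) gives the strict pointwise bound $v^2 = (u^p + (\sigma^*u)^p)^{2/p} < u^2 + (\sigma^*u)^2$ wherever both $u$ and $\sigma^*u$ are positive. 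Because $R_{h_a} = n(n-1) > 0$ is constant and $\{u > 0\}$ has full measure in $\Omega$, integration against $d\mu_a$ together with $\sigma$-invariance of $h_a$ upgrades these to the strict bound $E_a(v) < 2 E_a(u)$, hence $Q_b(\tilde v) < {\bf Y}_n$ and the contradiction.

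The main obstacle I anticipate is justifying that $v$ is an admissible competitor and that no boundary contribution from $\Sigma^{n-2}$ spoils the computation. This is exactly what the regularity theory of Section~2 provides: the bound $|du| \leq C\,\eps^{\nu-1}$ away from $\Sigma^{\eps}$ in Proposition~\ref{reg}(ii), combined with $|dv|^2 \leq |du|^2 + |d\sigma^*u|^2$ and $u \in W^{1,2}$, places $v \in W^{1,2}(\s^n)$ so that $\tilde v$ is a legitimate test function, and Lemma~\ref{LemmaIPP} guarantees that the integration-by-parts-driven identities produce no boundary contribution at the singular stratum.
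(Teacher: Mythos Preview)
Your argument is correct, and it follows a genuinely different route from the paper's.

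The paper also argues by contradiction via the double branched cover $\mathcal{P}:\s^n_a\to\s^n_b$, but it uses the \emph{linear} average $v = u + u\circ\gamma$ rather than your $L^p$-average $v = (u^p + (\sigma^*u)^p)^{1/p}$. With the linear average, the paper has to compute $E_a(v)$ via the integration-by-parts identity \eqref{confIPP} and the Yamabe equation $L_a v = Y_a\,v^{\langle (n+2)/(n-2)\rangle}$, and then runs a chain of H\"older inequalities together with the strict pointwise bound $(u^{n/(n-2)} + (u\circ\gamma)^{n/(n-2)})^{(n-2)/n} < u + u\circ\gamma$ to obtain $Q_b(v_0)<Y_a$. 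This is essentially Aubin's covering lemma adapted to the branched setting, and it genuinely needs Proposition~\ref{reg} and Lemma~\ref{LemmaIPP}.

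Your $L^p$-symmetrization bypasses all of this. The pointwise inequalities $|dv|^2 \leq |du|^2 + |d(\sigma^*u)|^2$ (from Cauchy--Schwarz and the superadditivity of $x\mapsto x^{(n+2)/n}$) and $v^2 < u^2 + (\sigma^*u)^2$ (from the subadditivity of $x\mapsto x^{(n-2)/n}$) give $E_a(v) < 2E_a(u)$ directly, with strictness coming from the positive constant scalar curvature $R_{h_a}=n(n-1)$. You never actually use the Yamabe equation or the integration-by-parts formula; the only regularity input you need is $u\in W^{1,2}\cap L^\infty$ and $u>0$ on $\Omega$, so your invocations of Proposition~\ref{reg}(ii) and Lemma~\ref{LemmaIPP} in the last paragraph are more than what your computation requires. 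What you gain is a shorter, more elementary proof; what the paper's approach gains is that its mechanism (via the conformal Laplacian and the Yamabe equation) depends only on $Y(\s^n,[h_a])>0$ rather than on the pointwise sign of $R_{h_a}$, so it transplants verbatim to Aubin's general covering lemma, whereas your strictness step is tied to $R_g>0$.
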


\begin{proof}
We follow the lines of \cite{Au76}. Consider $\s^n_a = (\s^n, h_a)$ and $\s^n_b = (\s^n, h_b)$ for $b=\frac{a}{2}$. 
Then $(\s^n, h_a)$ is a double branched cover of $(\s^n, h_b)$ with branched set $\Sigma = \mathbb{S}^{n-2}$. 
Denote by $\mathcal{P}: \s^n_a \rightarrow \s^n_b$ the covering map. 
Suppose that a Yamabe minimizer $u > 0$ with $||u||_{\p}=1$ does exist on $(\s^n, h_a)$. 
Proposition \ref{reg} clearly applies to $(\s^n, h_a)$, 
thus we know that $u$ is H\"older continuous: it belongs to $C^{0,\nu}(\s^n_a)$ for $\nu=1/a$. 
Moreover, by Proposition \ref{reg} (ii) we control by $\eps^{\nu-1}$ the norm of its gradient away from an $\eps$-tubular neighbourhood of the singular set $\Sigma$. 
Starting from $u$, we are going to construct a function $v_0$ on $\s^n_b$ with the same regularity as $u$, which satisfies $Q_b(v_0)<Y(\s^n, [h_a]).$

Let $\mathcal{G} = \{ id, \gamma \}$ be the deck transformation group of the double branched covering $\mathcal{P}: \s^n_a \rightarrow \s^n_b$. 
Consider the average 
$$ 
v := u + u\circ \gamma\quad {\rm on}\ \ \s^n_a, 
$$ 
and define for $p > 0$
$$ 
v^{\langle p \rangle} := u^p + (u\circ \gamma)^p\quad {\rm on}\ \ \s^n_a. 
$$ 
Note that $\gamma$ is an isometry of $(\s^n, h_a)$ as well as $id$. 
As a consequence, $v$, $v^{\langle p \rangle}$ and their gradients have the same regularity as $u$ and $|du|$. 
We also observe that, since the $\gamma$ is an isometry, we have 
\begin{equation}
\label{A}
\int_{\s^n} v^{\langle \frac{2n}{n-2} \rangle}d\mu_a= 2\int_{\s^n}u^{\frac{2n}{n-2}}d\mu_a. 
\end{equation}
Notice that on the regular set $\Omega$ we have 
\begin{equation}
\label{B}
L_a v= L_a(u + u\circ \gamma)=Y_a(u^{\frac{n+2}{n-2}} + (u\circ \gamma)^{\frac{n+2}{n-2}}) = Y_av^{\langle \frac{n+2}{n-2} \rangle}. 
\end{equation}

Consider $v_0 \in C^{0,\nu}(\s^n_b)$ the function on $(\s^n, h_b)$ whose lift on $(\s^n, h_a)$ is $v$. 
We are going to show that $Q_{b}(v_0) < Y_a$. 
We first compute $Q_{b}(v_0)$,
$$Q_{b}(v_0)=\frac{\displaystyle \frac{1}{2}\int_{\s^n}(a_n|d\mu|^2 + R_av^2)d\mu_a}{2^{-\frac{n-2}{n}}||v||_{\p}^2}=2^{-\frac{2}{n}}\frac{E_a(v)}{||v||_{\p}^2}.$$
Thanks to Proposition \ref{reg} and in particular to \eqref{confIPP}, we know that 
$$E_a(v)=\int_{\s^n}vL_avd\mu_a,$$
and by using \eqref{B} we obtain 
\begin{equation*}
E_a(v)=Y_a\int_{\s^n} v\cdot v^{\langle \frac{n+2}{n-2} \rangle} d\mu_a. 
\end{equation*}
By H\"older's inequality, we get
$$E_a(v) \leq Y_a \int_{\s^n} v\left( u^{\frac{n}{n-2}} + (u\circ \gamma)^{\frac{n}{n-2}} \right)^{\frac{n-2}{n}}
\left( u^{\frac{2n}{n-2}} + (u\circ \gamma)^{\frac{2n}{n-2}} \right)^{\frac{2}{n}} d\mu_a$$
We now use that, since $u, u\circ \gamma > 0$, the following strict inequality holds 
$$ \left( u^{\frac{n}{n-2}} + (u\circ \gamma)^{\frac{n}{n-2}} \right)^{\frac{n-2}{n}} < u + u\circ \gamma\quad {\rm at\ each\ point\ of}\ \s^n_a, $$
and then obtain
$$E_a(v) < Y_a  \int_{\s^n} v^2 \left( u^{\frac{2n}{n-2}} + (u\circ \gamma)^{\frac{2n}{n-2}} \right)^{\frac{2}{n}} d\mu_a. $$
Finally, thanks to H\"older's inequality and \eqref{A}, we get 
$$\int_{\s^n} v^2 \left( u^{\frac{2n}{n-2}} + (u\circ \gamma)^{\frac{2n}{n-2}} \right)^{\frac{2}{n}} d\mu_a 
\leq ||v||^2_{\p} \left(\int_{\s^n}v^{\langle \p \rangle}\right)^{\frac 2n}
= ||v||_{\p}^2 2^{\frac{2}{n}}.$$
By collecting this information, we have proven that 
$$Q_b(v_0)< Y_a.$$
In particular: 
\begin{equation*}
\label{strict}
Y(\s^n, [h_b]) < Y(\s^n, [h_a]).
\end{equation*}

But since $a \geq 2$, both $a$ and $b$ are greater than or equal to one, therefore \cite{M17} ensures that 
both $Y(\s^n, [h_b])$ and $Y(\s^n, [h_a])$ coincide with ${\bf Y}_n$. 
This contradicts the previous strict inequality, thus a Yamabe minimizer cannot exist on $(\s^n, h_a)$. 
\end{proof}

\begin{rem}
We point out that an argument similar to the one of Viaclovsky \cite{V10} can hardly be applied in our setting. His proof is also by contradiction and goes as follows. Consider the orbifold conformal compactification $(\hat{X}, \hat{g})$ of $(X^4_n, g)$ a hyperk\"ahler ALE metric with group $G$ of order $n >1$ at infinity, and assume that there exists a Yamabe minimizer $\tilde{g}$ in the conformal class of $\hat{g}$. The metric $\hat g$ is Einstein, and Viaclovsky shows a generalization of a result by Obata for Einstein manifolds: if $\hat{g}$ is an Einstein metric, then any other constant scalar curvature metric $\tilde{g}=\varphi^{-2}g$ in its conformal class is an Einstein metric as well. This implies that on $(X^4_n,g)$ there exists a concircular scalar field: a rigidity result by Tashiro \cite{T65} for manifolds carrying such fields and the classification of hyperk\"ahler ALE 4-manifolds due to Kronheimer \cite{Kr89} lead then to a contradiction. 

Viaclovsky is able to use the result on the conformal class of an Einstein metric because in his case the conformal factor and its gradient are appropriately controlled in a neighbourhood of the orbifold singularity.


Now, Theorem 4.3 in \cite{M18} provides the analog of Obata's result for a compact stratified space endowed with an Einstein iterated edge metric with angles \emph{smaller} than $2\pi$ along the codimension 2 singular set. This strongly depends on the conformal factor being Lipschitz, with both bounded gradient and Laplacian. As we have seen in Section 2, when the cone angle along the codimension 2 singularity is greater than $2\pi$, the sharp regularity for a Yamabe minimizer is H\"older continuity, and therefore the proof Obata's result for stratified spaces does not hold. 
Indeed, for any constant scalar curvature metric $g = u^{4/(n-2)}\,h_a \in [h_a]$ on $\s^n$, 
$u$ has the following expansion in a standard transversal polar coordinate system $(\rho, x, \theta) \in (0, \frac{\pi}{2}) \times \s^{n-2} \times \s^1$ 
near the singular stratum $\s^{n-2}$: 
$$ 
u(\rho, x, \theta) = \phi_0(x, \theta) + \phi_1(x, \theta)\,\rho^{\frac{1}{a}} + \cdots, 
$$ 
where $\phi_0, \phi_1 \in C^{\infty}(\s^{n-2} \times \s^1)$, $L^{-1} \leq \phi_0 \leq L$ (for some $L \geq 1$) on $\s^{n-2} \times \s^1$, 
and moreover 
$$ 
\frac{\partial u}{\partial \rho} = O(\rho^{\frac{1}{a} - 1}). 
$$ 
Hence, when $a > 1$, $u$ is only $C^{0, \frac{1}{a}}$-H\"older continuous generally (see \cite[Section\,3]{ACM14} for details).  

\end{rem}

\section{Sobolev inequalities in a singular $\R^n$}
Thanks to the conformal equivalence between $(\s^n, h_a)$ and $(\R^n, g_a)$, 
we know that a Sobolev inequality holds on $(\R^n, g_a)$. 
Moreover, we know that the optimal constant of such Sobolev inequality is given by the Yamabe constant $Y_a$ of the standard edge-cone sphere $(\s^n, h_a)$. 
More precisely, if we denote by $\mu_a$ the measure associated to $g_a$, for any Sobolev function with compact support $u \in W^{1,2}_0(\R^n,\mu_a)$, 
we have 
\begin{equation}
\label{Sob}
Y_a \left(\int_{\R^n}u^{\p}d\mu_{a}\right)^{\frac{n-2}{n}}\leq \int_{\R^n} |du|^2d\mu_a.
\end{equation}
Note that $d\mu_a=ad\mu_1$, where $\mu_1$ is the usual Lebesgue measure on $\R^n$. 
The previous inequality can be extended to functions $u \in W^{1,2}_{loc}(\R^n, \mu_a)$. 

When considering $\R^n$ endowed with the Euclidean metric $g_1 = g_{\mathbb{E}}$, 
we know that there exist extremal functions attaining the equality in the Sobolev inequality \eqref{Sob}. 
They are the functions $u_{\lambda,v}$ defined in Section 3, 
by composing the conformal factor of $\varphi^*h_1$ with a dilation $\delta_{\lambda}$ and a translation $\tau_v$. 
We also know that for $a \in (0,1]$
$$\varphi^*h_a=4u_{\lambda,v}^{\frac{4}{n-2}}g_a$$
are Yamabe metrics. 
As a consequence, for $a \in (0,1]$ and for any $\lambda \in \R$, vector $v \in \R^n$, 
the functions $u_{\lambda,v}$ attain the equality in the Sobolev inequality \eqref{Sob}. 
As for $a > 1$, observe the following 
\begin{equation*}
\begin{split}
{\bf Y}_n \left(\int_{\R^n}|u_{\lambda,v}|^{\p}d\mu_a\right)^{\frac{n-2}{n}} & 
= {\bf Y}_n a^{\frac{n-2}{n}}\left(\int_{\R^n} |u_{\lambda,v}|^{\p}d\mu_1  \right)^{\frac{n-2}{n}} \\
& = a^{\frac{n-2}{n}}\left(\int_{\R^n}|du_{\lambda,v} |^2 d\mu_1 \right) \\
& = a^{\frac{n-2}{n}-1} \left(\int_{\R^n}|du_{\lambda,v}|^2 d\mu_a \right).
\end{split}
\end{equation*}
This implies  
\begin{equation*}
Y_a\left( \int_{\R^n}u^{\p}d\mu_{a}\right)^{\frac{n-2}{n}} < a^{\frac{2}{n}} {\bf Y}_n \left(\int_{\R^n}|u_{\lambda,v}|^{\p}d\mu_a\right)^{\frac{n-2}{n}} = \int_{\R^n}|du_{\lambda,v}|^2 d\mu_a. 
\end{equation*}
This means that the functions $u_{\lambda,v}$ never attain the equality in \eqref{Sob} when $a > 1$. 
This corresponds to the fact that $h_a$ is not a Yamabe metric for $a >1$. 
We also know that for $a \geq 2$, the conformal class of $h_a$ does not contain any Yamabe metric. 
As a consequence, the equality in \eqref{Sob} cannot be attained when $a\geq 2$. 
We can summarize this in the following: 

\begin{prop}
Consider $\R^n$ endowed with the metric $g_a = g_{\mathbb{E}^{n-2}} + dr^2 + a^2r^2d\theta^2$. 
\begin{itemize}
\item[(a)] If $a \leq 1$, for any $u \in W^{1,2}_{loc}(\R^n)$ we have
\begin{equation*}
 a^{\frac{2}{n}}{\bf Y}_n  \left(\int_{\R^n}|u|^{\p}d\mu_a\right)^{\frac{n-2}{n}} \leq \int_{\R^n}|du|^2 d\mu_a.
\end{equation*} 
Moreover, the functions $ u_{\lambda,\mu}$ are extremal for the previous inequality. 
\item[(b)] If $a >1$, for any $u \in W^{1,2}_{loc}(\R^n)$ we have: 
\begin{equation*}
{\bf Y}_n\left(\int_{\R^n}|u|^{\p}d\mu_a\right)^{\frac{n-2}{n}} \leq \int_{\R^n}|du|^2 d\mu_a.
\end{equation*}
Moreover, if $a \geq 2$, extremal functions for the latter inequality do not exist. 
\end{itemize}
\end{prop}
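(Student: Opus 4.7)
The plan is to synthesize the material established in Sections 1, 3, and 4 with the stereographic identification $\Phi^*h_a = \bigl(\tfrac{2}{1+|x|^2+r^2}\bigr)^2 g_a$. Everything in the statement, with the exception of the extremality assertions, is essentially a reformulation of results already proved; the argument should therefore be short.

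The first step is to identify the optimal constant in the Sobolev inequality on $(\R^n, g_a)$ with the Yamabe constant $Y_a$. The key observation is that $g_a$ is flat on its regular set $\R^n \setminus \mathcal{S}$---locally the product of a Euclidean factor with a two-dimensional flat cone---so $R_{g_a} \equiv 0$ there. Under the normalizations of the paper, the Yamabe functional on $(\R^n, g_a)$ therefore reduces to the Dirichlet--Sobolev quotient that appears in the displayed inequalities. Conformal invariance of the Yamabe functional, combined with the density of compactly supported functions on $\s^n \setminus \{N\}$ in $W^{1,2}(\s^n)$, yields $Y(\R^n, [g_a]) = Y(\s^n, [h_a]) = Y_a$. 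Substituting the values from Corollary 1.6---$Y_a = a^{2/n}\mathbf{Y}_n$ for $a \leq 1$, and $Y_a = \mathbf{Y}_n$ for $a \geq 1$---gives precisely the two Sobolev inequalities in (a) and (b).

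For the extremality in (a), I would invoke the classification from Section 3: when $a \leq 1$, every Yamabe metric on $(\s^n, [h_a])$ is of the form $c\cdot\varphi^*h_a$ for some conformal diffeomorphism $\varphi$ preserving the singular set. Unwinding the stereographic identification, as already laid out in the paragraph immediately preceding the Proposition, each such metric corresponds on $\R^n$ to $4u_{\lambda,v}^{4/(n-2)}g_a$ for some $\lambda > 0$ and $v \in \R^n$. Since these metrics realize $Y_a$, the corresponding $u_{\lambda,v}$ attain equality in (a).

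For the non-existence of extremals in (b) when $a \geq 2$, I would argue by contradiction. Given a putative extremal $u$, replacing $u$ by $|u|$ we may assume $u \geq 0$; then via the stereographic projection the function
\[
\tilde u(p) = \Bigl(\tfrac{2}{1+|\pi(p)|^2+r(\pi(p))^2}\Bigr)^{\!(n-2)/2} u(\pi(p))
\]
on $\s^n \setminus \{N\}$ would extend to a non-trivial, non-negative minimizer of the Yamabe functional on $(\s^n, h_a)$; positivity then follows from a Harnack-type argument on the regular set, contradicting the Main Theorem. The main technical point to verify---the only real obstacle---is that $\tilde u$ indeed lies in $W^{1,2}(\s^n) \cap L^{\p}(\s^n)$ and that the extension across the point $N$ (a removable singularity in $W^{1,2}$) does not lower the Yamabe quotient. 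Both facts follow from the conformal invariance in dimension $n$ of the Dirichlet energy and of the $L^{\p}$-norm, together with the finiteness of $\|u\|_{\p}$ on $(\R^n, \mu_a)$ that is implicit in the meaningfulness of the equality case.
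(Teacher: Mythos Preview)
Your proposal is correct and follows essentially the same route as the paper: both arguments reduce the Sobolev inequality to the computation of $Y(\s^n,[h_a])=Y_a$ via the conformal equivalence $\Phi^*h_a=4u^{4/(n-2)}g_a$ and the flatness of $g_a$, read off the extremals in~(a) from the description of Yamabe metrics in Section~3, and deduce the non-existence of extremals for $a\geq 2$ directly from the Main~Theorem. The only difference is presentational: the paper additionally carries out, for $a>1$, the explicit check (using $d\mu_a=a\,d\mu_1$) that the functions $u_{\lambda,v}$ themselves fail to attain equality, while you instead spell out in more detail the passage from a putative extremal on $(\R^n,g_a)$ back to a Yamabe minimizer on $(\s^n,h_a)$; neither addition is needed for the Proposition as stated.
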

In the second case, an interesting question would be to study the existence in $(\R^n,g_a)$, $a >1$, of not necessarily positive solutions to the Yamabe equation (see for example \cite{DMPP}).


\vspace{10mm} 

\bibliographystyle{amsbook}

\vspace{10mm}

\end{document}